\def\qed{\hfill {\hbox{${\vcenter{\vbox{               
   \hrule height 0.4pt\hbox{\vrule width 0.4pt height 6pt
   \kern5pt\vrule width 0.4pt}\hrule height 0.4pt}}}$}}}
\def\utr{\, \underline{\triangleright}\, }
\def\otr{\, \overline{\triangleright}\, }
\def\ud{\, \underline{\bullet}\, }
\def\od{\, \overline{\bullet}\, }
\newtheorem{theorem}{Theorem}
\newtheorem{lemma}[theorem]{Lemma}
\newtheorem{proposition}[theorem]{Proposition}
\newtheorem{corollary}[theorem]{Corollary}
\newtheorem{conjecture}{Conjecture}
\newtheorem{observation}{Observation}
\theoremstyle{definition}
\newtheorem{example}{Example}
\newtheorem{definition}{Definition}
\newtheorem{remark}{Remark}
\date{}
\title{\Large \textbf{Psyquandles, Singular Knots and Pseudoknots}}
\author{Sam Nelson\footnote{Email: Sam.Nelson@cmc.edu. Partially supported by Simons Foundation collaboration grant 316709.}\and
Natsumi Oyamaguchi\footnote{Email: natsumi.3-29.math@diary.ocn.ne.jp}\and
Radmila Sazdanovic\footnote{Email: rsazdanovic@math.ncsu.edu. Partially supported by Simon Foundation collaboration grant 318086.}}
\begin{document}
\maketitle

\begin{abstract}
We generalize the notion of biquandles to \textit{psyquandles} and use these 
to define invariants of oriented singular links and pseudolinks. In addition to 
psyquandle counting invariants, we introduce Alexander psyquandles and 
corresponding invariants such as Alexander psyquandle polynomials and 
Alexander-Gr\"obner psyquandle invariants of oriented singular knots and links.
We consider the relationship between Alexander psyquandle 
colorings of pseudolinks and $p$-colorings of pseudolinks.
As a special case we define a generalization of the Alexander polynomial for
oriented singular links and pseudolinks we call the \textit{Jablan polynomial} 
and compute the invariant for all pseudoknots with up to five crossings and 
all 2-bouquet graphs with up to 6 classical crossings.
\end{abstract}

\parbox{6in} {\textsc{Keywords:} Biquandles, singular knots and links, spatial graphs, 2-bouquet graphs, pseudoknots, psyquandles, counting invariants, Alexander-Gr\"obner invariants, Jablan polynomial

\smallskip

\textsc{2010 MSC:} 57M27, 57M25}

\section{\large\textbf{Introduction}}\label{I}

First suggested in the mid 1990s \cite{FRS} and later developed in the 2000s
\cite{KR, FJK, EN}, \textit{biquandles} are algebraic structures whose axioms
are motivated by the oriented Reidemeister moves in knot theory. Biquandles
have been used since their introduction to define invariants of classical and 
virtual oriented knots and links. \cite{CES,EN,FJK,NOR, NR,NW,NV}

\textit{Singular knots and links} are 4-valent spatial graphs considered
up to \textit{rigid vertex isotopy}, where we may regard a vertex as the result
of two strands of a knot or link getting stuck together in a fixed position.
Singular knots and links are important in the study of \textit{Vassiliev 
invariants}; see \cite{GPV,V,V2}. In particular, a singular knot or link
with exactly one singular crossing is a \textit{2-bouquet graph}.

\textit{Pseudoknots} are knots including some \textit{precrossings}, classical 
crossings where we can't tell which strand goes on top. This definition, 
statistical in nature, is motivated by applications in molecular biology, such 
as modeling knotted DNA, where data often comes inconclusive with respect to 
which crossing it represents. 
\cite{HHJJMR,HJ,HJJ}.
\[\includegraphics{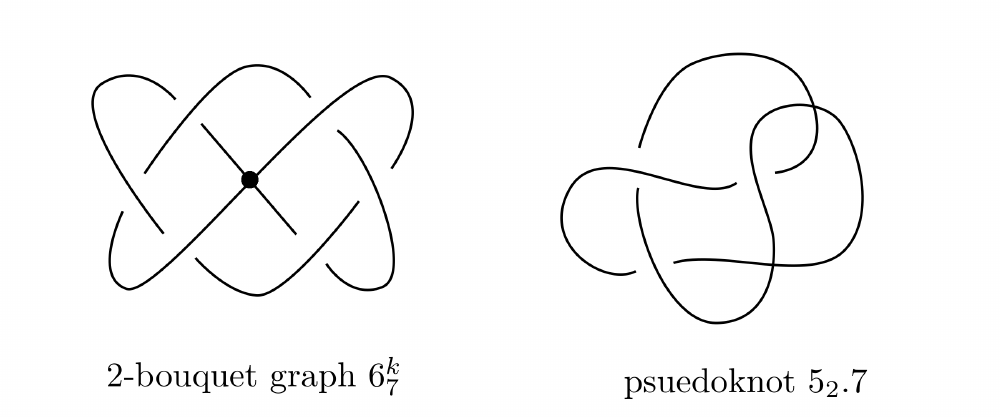}\]

Motivated by effectiveness of biquandles in distinguishing oriented knots and 
links, we introduce \textit{psyquandles} and use them to
define invariants of oriented singular knots and links and
oriented pseudoknots and pseudolinks. 
A psyquandle is a biquandle with additional structure 
in the form of operations at singular crossings or precrossings.
The paper is organized as follows. In Section \ref{SB} we review the
basic combinatorics of oriented singular knots and links and pseudoknots and
pseudolinks. In Section \ref{PSY} we introduce psyquandles and prove that
psyquandle colorings of singular knots and links and of pseudoknots and 
pseudolinks define invariants. In Section \ref{A} we introduce a particular 
type of psyquandle we call \textit{Alexander psyquandles} and use these to 
define analogs of the Alexander polynomials and Alexander-Gr\"obner invariants
for oriented singular knots and links and for oriented pseudoknots and 
pseudolinks. We consider the relationship between Alexander psyquandle 
colorings and $p$-colorings of pseudolinks as defined in \cite{HHJJMR}.
We introduce the \textit{Jablan polynomial} which generalizes the Alexander 
polynomial to the case of pseudolinks and singular links.
We end in Section \ref{Q} with some questions for future work.

\section{\large\textbf{Singular Knots and Pseudoknots}}\label{SB}

\textit{Singular knots and links} are rigid vertex isotopy classes of 4-valent 
spatial 
graphs. That is, a singular link diagram has classical crossings and 4-valent
vertices which are required to maintain a fixed cyclic ordering around the 
vertices. Geometrically, we can think of singular links as links with 
transverse self-intersections, each of which is fixed inside a small 
neighborhood.  An \textit{oriented singular knot or link} has oriented 
strands which pass through at each crossing and vertex; that is, the 
orientations are as pictured below.
\[\includegraphics{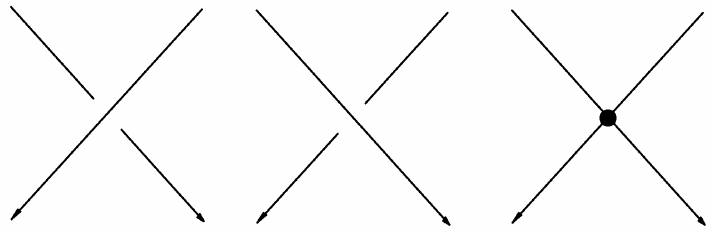}\]
Singular knot theory finds applications in the study of \textit{Vassiliev 
invariants}, integer-valued invariants of singular knots and links which
satisfy the \textit{Vassiliev skein relation}:
\[\includegraphics{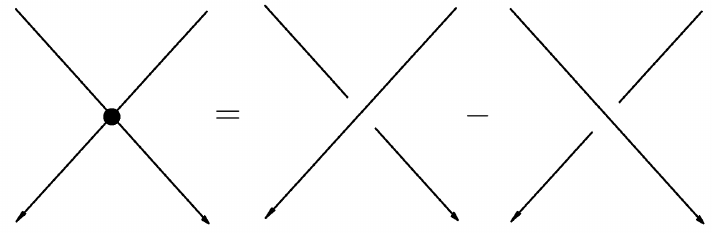}\]
See \cite{GPV,V,V2} for more.

\begin{example}
A \textit{2-bouquet graph} is a singular knot with exactly one singular 
crossing. 2-Bouquet graphs come in two types: \textit{K}-type 2-bouquet graphs 
form knots if the singular crossing is replaced with a classical crossing, while
\textit{L}-type 2-bouquet graphs form 2-component links when the singular 
crossing is replaced with a classical crossing.
The second listed author classified 2-bouquet graphs with up to six 
classical crossings in \cite{O}.
\end{example}

In \cite{BEHY} a generating set of three oriented singular moves is 
identified and shown to generate the remaining oriented singular moves:

\begin{theorem}\label{thm:behy} (BEHY)
In the presence of the oriented classical Reidemeister moves, the three moves
below generate the complete set of oriented singular moves.
\[\includegraphics{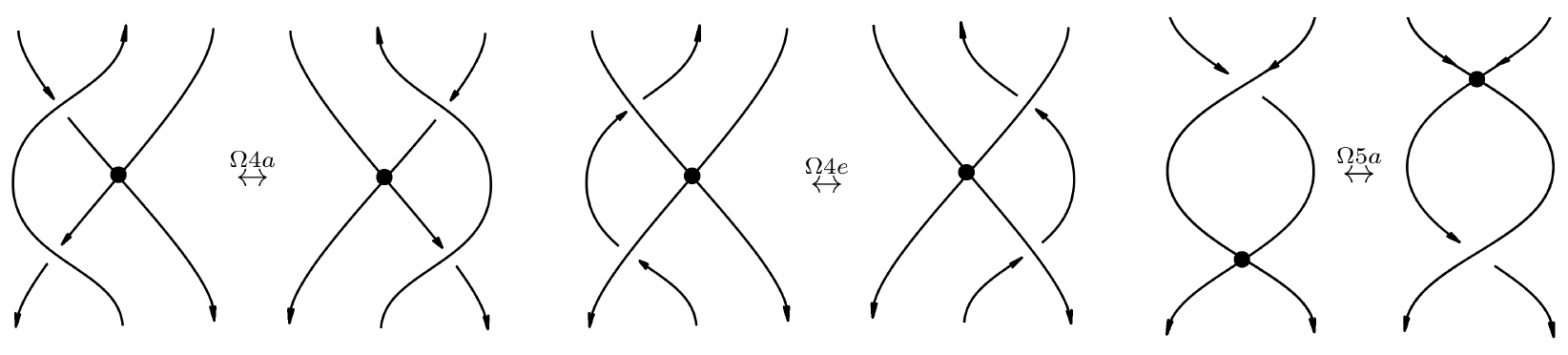}\]
\end{theorem}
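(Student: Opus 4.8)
The plan is to follow the strategy used to establish analogous results for the oriented classical Reidemeister moves (as in Polyak's analysis of the classical case): first enumerate the complete list of oriented singular Reidemeister moves, and then show that every move on that list can be realized as a finite sequence of the three distinguished moves together with oriented classical Reidemeister moves.

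First I would write down the full list. Besides the three pictured moves, the complete set includes a singular analogue of Reidemeister~II (a classical strand slid across a rigid vertex, passing either entirely over or entirely under it) and a singular analogue of Reidemeister~III in which one of the three strands carries the vertex, each equipped with its various admissible orientation assignments. The observation that shrinks the list is that the diagram symmetries preserving the rigid cyclic order at the vertex and compatible with orientations --- rotation by $\pi$, and reflection combined with reversing the orientation of every strand --- are realized by ambient isotopy and are therefore ``free''; applying them identifies whole families of the enumerated moves with one another, leaving only a few genuinely distinct representatives to treat.

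Next I would dispose of the surviving representatives one at a time by explicit movie-move arguments. For the singular-R2-type moves the standard device is to push a canceling pair of classical crossings (a classical R2 ``finger'') up to the rigid vertex, so that the strand being slid across the vertex is locally isotoped into a position where one of the three generators directly applies; one performs that generator and then removes the finger with another classical R2, using oriented classical R3 moves to commute the finger past intervening arcs. The singular-R3-type moves are handled the same way, using the singular-R2 moves just derived to bring the picture into a normal form and then a single generator to finish.

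The main obstacle is precisely this case analysis: the mixed-orientation versions of the singular R3 move, in which the three strands are not coherently oriented, are not obtainable from a generator by any symmetry and demand a separately chosen insertion of canceling crossings for each orientation pattern. The delicate point in each subcase is to check that the finger introduced at the start really is undone by a legitimate classical R2 at the end rather than left behind; this is finite bookkeeping, and each subcase is a routine if tedious diagram chase. The full verification is carried out in \cite{BEHY}.
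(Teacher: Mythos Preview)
The paper does not prove this theorem at all: it is stated as a result of \cite{BEHY} and simply cited, with no argument given. Your proposal therefore goes well beyond what the paper does --- you sketch the expected Polyak-style enumeration-and-reduction argument and then, like the paper, defer the full verification to \cite{BEHY}. Since the paper's ``proof'' is just the citation, your sketch is consistent with (and strictly more informative than) what appears there; there is nothing substantive to compare.
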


For our purposes it will be easier to use an alternative generating set of 
singular moves.

\begin{proposition}\label{ourmoves}
In the presence of classical Reidemeister moves, the three moves
below generate the complete set of oriented singular moves.
\[\includegraphics{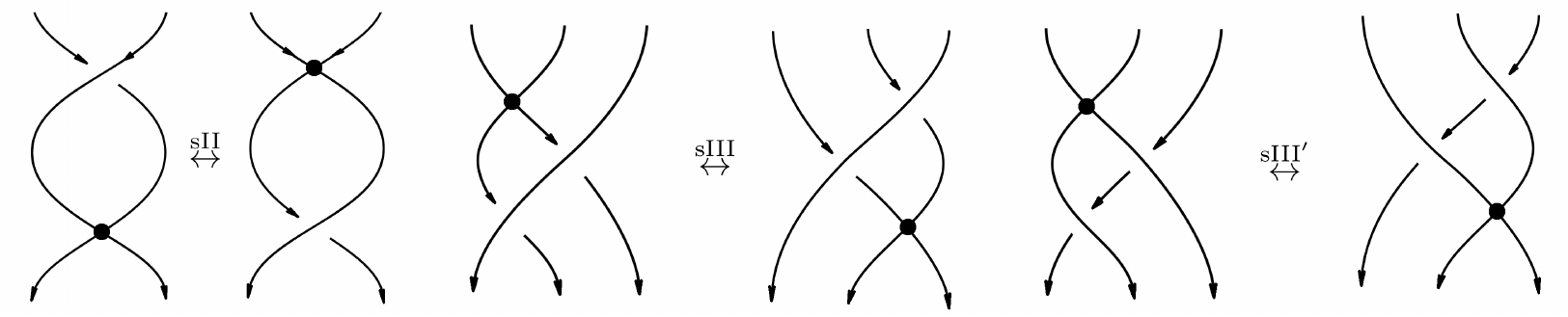}\]
\end{proposition}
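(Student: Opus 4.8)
The plan is to prove Proposition \ref{ourmoves} by showing that the generating set exhibited there is interderivable, modulo the classical oriented Reidemeister moves, with the generating set of Theorem \ref{thm:behy}. Since BEHY have already shown that their three moves generate the complete set of oriented singular moves, once interderivability is established it follows immediately that the three moves of Proposition \ref{ourmoves} do as well. Thus there are two things to verify: first, that each of the three moves listed in Proposition \ref{ourmoves} is itself realized by a finite sequence consisting of classical Reidemeister moves and the BEHY moves (so that our proposed moves genuinely lie in the span of the complete set and we are not secretly enlarging it); and second, the substantive direction, that each of the three BEHY moves can be recovered from a finite sequence of classical Reidemeister moves together with the three moves of Proposition \ref{ourmoves}.

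For the first direction I would give, for each move of Proposition \ref{ourmoves}, a short diagrammatic reduction to the BEHY moves. These are the usual ``rotate the picture'' and detour arguments: one introduces a cancelling pair of kinks with an R1 move, transports the relevant strand around the singular vertex using one of the BEHY pass-moves, and then removes the kinks, possibly also using an R2/R3 sequence to fix the over/under data at the nearby classical crossings. No new ideas are needed here.

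For the second, substantive, direction I would proceed move-by-move through the three BEHY moves. For each one, I begin with its left-hand side, introduce an auxiliary arc or kink so that a subdiagram matching the left-hand side of one of the moves of Proposition \ref{ourmoves} appears, apply that move to reposition the singular vertex, and then clean up with classical Reidemeister moves (and, if necessary, a second application of one of our moves) to arrive at the right-hand side. The observation that makes this feasible is that, in each case, the BEHY move and the relevant move of Proposition \ref{ourmoves} differ only in which strand passes over or under near the singular vertex and in the cyclic positions of the incident strands; once the vertex itself has been moved into the correct position by a single application of one of our moves, the remaining discrepancy is purely classical and is absorbed by an R2/R3 sequence.

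The main obstacle I anticipate is the bookkeeping of crossing and orientation data: each singular move comes in several variants according to the orientations of the two strands through the vertex and the over/under information at the adjacent classical crossings, and one must check that the chosen sequences handle every variant rather than a single representative. I would mitigate this by first cutting down the number of cases — for instance, by showing that the variants of a given move obtained by rotating a crossing by $180^\circ$ follow from one another using only R2 moves — and only then treating the surviving representatives with explicit diagrams, leaving the remaining mechanical variants to the reader. \qed
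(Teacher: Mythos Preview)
Your approach is correct and essentially the same as the paper's: reduce to Theorem~\ref{thm:behy} by showing that the BEHY moves follow from the proposed moves together with classical Reidemeister moves (your ``first direction'' is automatic, since the proposed moves are themselves oriented singular moves and hence already lie in the span of the complete set). The paper's execution is shorter than you anticipate: one of the proposed moves, sII, literally coincides with $\Omega 5a$, and each of $\Omega 4a$ and $\Omega 4e$ is obtained from a single application of sIII (respectively sIII$'$) flanked by two Reidemeister~II moves---no R1 kinks, no second application of a singular move, and no extended orientation case analysis are required.
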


\begin{proof}
It suffices to show that the moves in Theorem \ref{thm:behy} can be obtained 
using our preferred moves and the oriented classical Reidemeister moves.
Since move sII is the same as move $\Omega 5a$, we need only to show that 
moves $\Omega 4a$ and $\Omega 4e$ can be obtained using the classical 
Reidemeister moves and moves sII, sIII and sIII$'$. Then consider the case of
$\Omega 4a$; we will obtain it using sIII and two classical Reidemeister II 
moves.  
\[\includegraphics{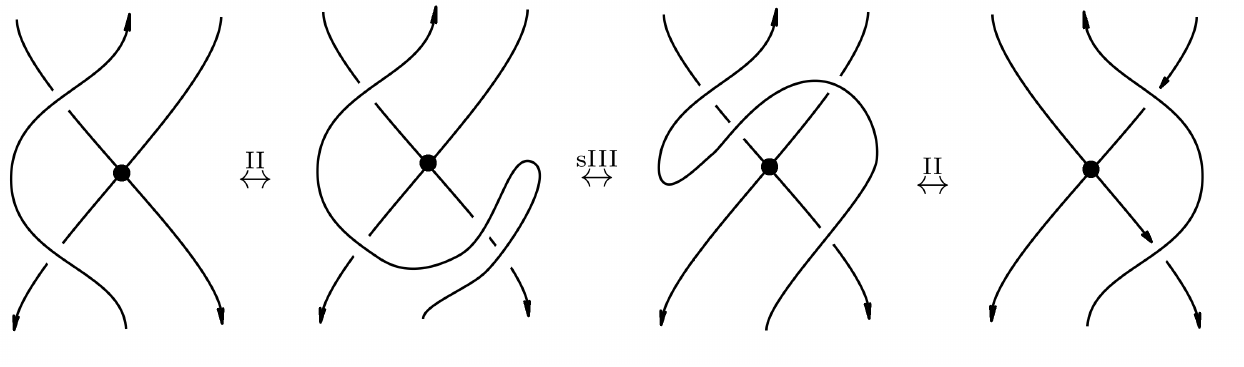}\]
The case of $\mathrm{sIII}'\Rightarrow \Omega4e$ is similar.
\end{proof}

\textit{Pseudoknots} are knots and links which in addition to classical
crossings include some \textit{precrossings}, classical crossings in which 
it is unknown which strand goes over and which strand goes under. While the 
concept originated in biology where limited resolution in pictures of knotted
molecules makes it difficult to tell which strand in on top, the current 
mathematical study of pseudoknots 
was initiated in \cite{H} and continued in papers such as \cite{HHJJMR,HJ,HJJ}.
A precrossing is drawn as an undecorated self-intersection:
\[\includegraphics{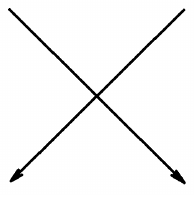}\]

The Reidemeister moves for pseudoknots (see \cite{HJJ} etc.)
are, conveniently, very similar to our preferred set of Reidemeister moves for
oriented singular knots:
\[\includegraphics{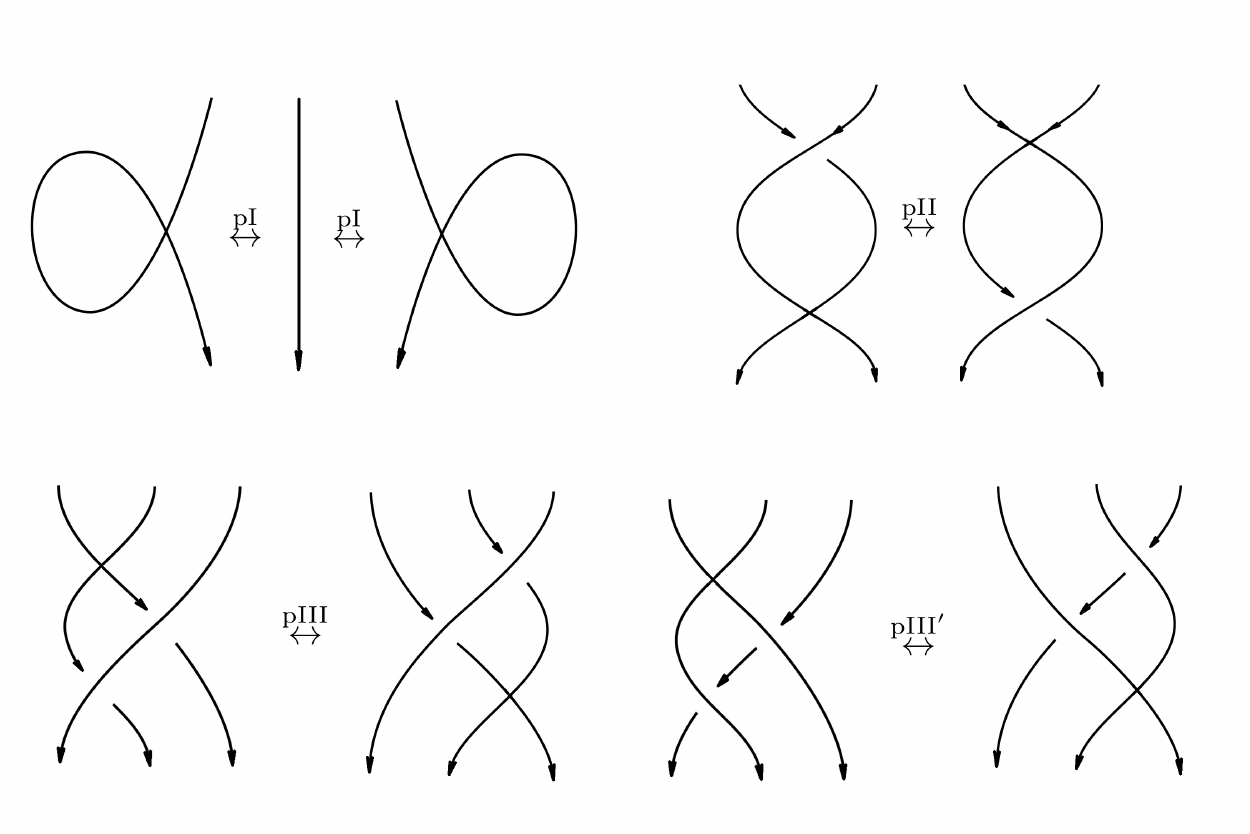}\]
Indeed, after replacing singular crossings with precrossings, the only 
difference is the addition of a Reidemeister I-style 
move with a precrossing, no analog of which exists for singular knots.

A \textit{resolution} of a pseudolink diagram is an assignment of classical 
crossing type to each of the precrossings in the diagram. A powerful invariant
of pseudolinks is the \textit{weighted resolution set} or \textit{WeRe set}, the 
discrete probability distribution consisting of the set of resolution link 
types and their associated probabilities with the assumption that both crossing 
resolutions are equally probable.

\begin{example}\label{ex:pl}
The pseudolink below has the listed WeRe set where $0_2$ is the unlink of two 
components and L2a1 is the Hopf link.
\[\raisebox{-0.75in}{\includegraphics{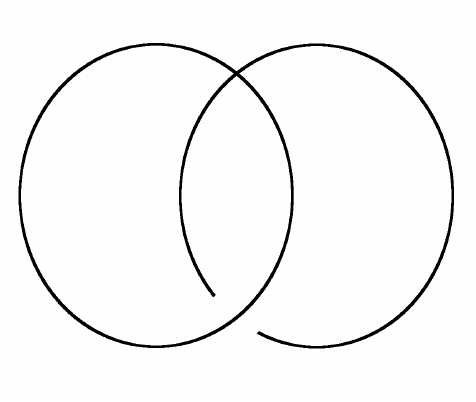}}\quad
\left\{\left(0_2,\frac{1}{2}\right),\left(L2a1,\frac{1}{2}\right)\right\}\]
\end{example}

\section{\Large\textbf{Psyquandles}}\label{PSY}

The similarity of the singular Reidemeister moves with the pseudoknot
Reidemeister moves suggests introducing a single algebraic structure
for coloring these objects with new operations at the singular crossings
or precrossings.

Recall (see \cite{EN} for example) that a \textit{biquandle} is a set $X$
with operations $\utr,\otr:X\times X\to X$ satisfying
\begin{itemize}
\item[(i)] For all $x\in X$, $x\utr x=x\otr x$,
\item[(ii)] For all $x,y\in X$, the maps $\alpha_y,\beta,y:X\to X$ and 
$S:X\times X\to X\times X$ defined by
\[\alpha_y(x)=x\otr y, \quad \beta_y(x)=x\utr y\quad \mathrm{and}\quad
S(x,y)=(S_1(x,y),S_2(x,y))=(y\otr x,x\utr y)\]
are invertible, and
\item[(iii)] For all $x,y,z\in X$ the \textit{exchange laws} are satisfied:
\[\begin{array}{rcl}
(x\utr y)\utr(z\utr y) & = & (x \utr z)\utr (y\otr z) \\
(x\utr y)\otr(z\utr y) & = & (x \otr z)\utr (y\otr z) \\
(x\otr y)\otr(z\otr y) & = & (x \otr z)\otr (y\utr z). \\
\end{array}\]
\end{itemize}
The biquandle axioms are motivated by the classical Reidemeister moves where
we label the \textit{semiarcs} in a knot diagram (the edges in the graph 
obtained from the diagram by making each crossing a 4-valent vertex) as shown:
\[\includegraphics{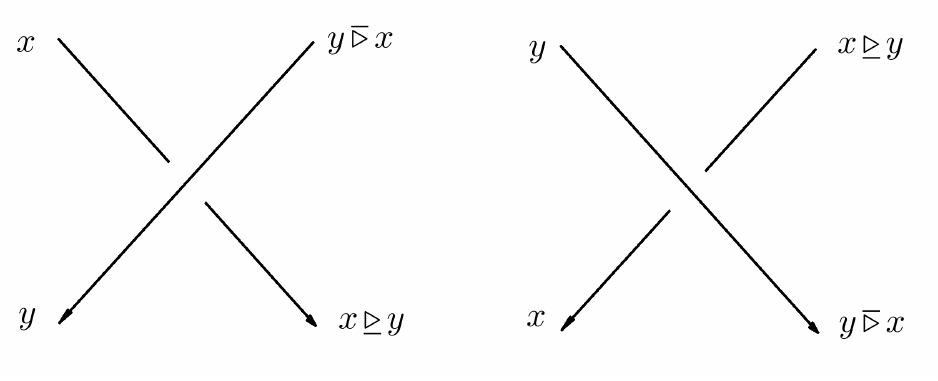}\] 
Axiom (ii) is equivalent to the \textit{adjacent labels rule}, which says that
the colors of any two adjacent semiarcs determine the colors of the other two.

\begin{definition}
A \textit{psyquandle} is a biquandle $X$ with two additional binary operations 
$\ud,\od:X\times X\to X$ satisfying the conditions
\begin{itemize}
\item[(p/si)] For all $x,y\in X$, the maps $\alpha'_y,\beta'_y:X\to X$ and 
$S':X\times X\to X\times X$ defined by
\[\alpha'_y(x)=x\od y, \quad\beta'_y(x)=x\ud y\quad \mathrm{and}\quad
S'(x,y)=(S_1'(x,y),S_2'(x,y))=(y\od x,x\ud y)\]
are invertible,
\item[(p/sii)] For all $x,y\in X$ there exist unique $w,z\in X$ such that
\[\begin{array}{rcl}
x\utr y & = & z\od y \\
y\otr x & = & w\od x \\
w\utr z & = & y\ud z \\
z\otr w & = & x\ud w \\
\end{array}
\]
and
\item[(p/siii)] For all $x,y,z\in X$ we have the \textit{mixed exchange laws}:
\[\begin{array}{rcl}
(x\otr y)\otr(z\od y) & = & (x\otr z)\otr(y\ud z)\\
(x\utr y)\utr(z\od y) & = & (x\utr z)\utr(y\ud z)\\
(x\otr y)\od(z\otr y) & = & (x\od z)\otr(y\utr z)\\
(x\utr y)\ud(z\utr y) & = & (x\ud z)\utr(y\otr z)\\
(x\otr y)\ud(z\otr y) & = & (x\ud z)\otr(y\utr z)\\
(x\utr y)\od(z\utr y) & = & (x\od z)\utr(y\otr z)\\
\end{array}\]
\end{itemize}
A psyquandle is \textit{pI-adequate} if it additionally satisfies for all
$x\in X$
\[x\ud x=x\od x.\]
\end{definition}

\begin{definition}\label{def:psy}
Let $X$ be a psyquandle (respectively, a pI-adequate psyquandle) and $L$ an 
oriented singular link (respectively, oriented pseudolink) diagram. Then an
\textit{$X$-coloring} of $L$ is an assignment of elements of $X$to the semiarcs
in $L$ such that every crossing we have the following:
\[\scalebox{0.85}{\includegraphics{sn-no-rs-2.pdf}\includegraphics{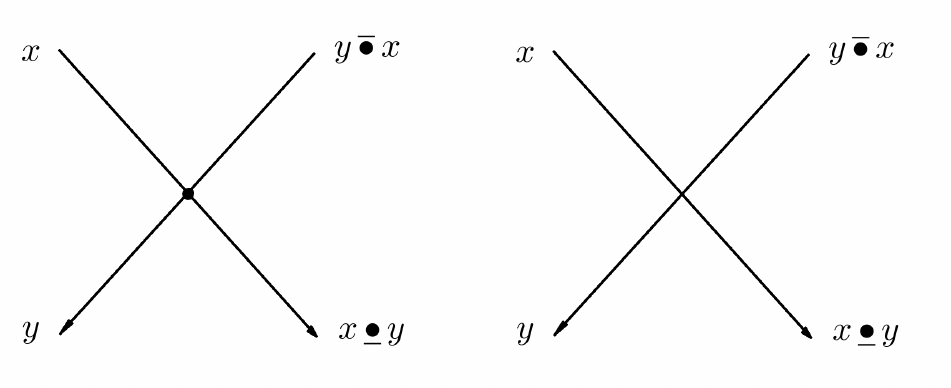}}\] 
\end{definition}

The psyquandle axioms are motivated by the moves $\{$p/sII, p/sIII, p/sIII$'\}$
(and in the case of pI-adequate psyquandles, move pI) using the coloring rule
in definition \ref{def:psy}
at singular crossings and precrossings. In particular, we have:

\begin{theorem}
Let $L$ be an oriented singular link (respectively, pseudolink) diagram.
For any finite psyquandle $X$, the number of $X$-colorings of $L$ is not
changed by Reidemeister moves and hence defines an invariant 
$\Phi_X^{\mathbb{Z}}(L)$ called the \textit{psyquandle counting invariant.}
\end{theorem}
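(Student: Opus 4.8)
The plan is to verify invariance move-by-move, exactly as one does for biquandle counting invariants, but now also checking the three (or four) psyquandle-specific moves. Since the psyquandle operations $\utr,\otr$ form a biquandle, the classical oriented Reidemeister moves are already handled by the standard biquandle argument: axiom (i) handles the Reidemeister I moves, axiom (ii) (the adjacent labels rule) guarantees that colorings near the altered portion of the diagram extend uniquely and bijectively for Reidemeister II, and axiom (iii) (the exchange laws) gives the bijection for Reidemeister III. So the real content is the moves involving a singular crossing or precrossing, namely p/sII, p/sIII, p/sIII$'$ and, for pI-adequate psyquandles, pI.

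First I would fix, for each of these moves, a generic coloring of the semiarcs entering the local tangle on one side of the move, and compute the induced colors on the outgoing semiarcs using the coloring rule of Definition~\ref{def:psy}; then do the same on the other side; then show the two outgoing color-tuples agree as functions of the incoming data, and that the correspondence between colorings of the two diagrams is a bijection. For the pI move, one checks that a kink with a precrossing can be colored iff the two semiarcs at the base receive the same color, which is precisely the condition $x\ud x=x\od x$ (together with the already-known $x\utr x=x\otr x$); this is why pI-adequacy is exactly what is needed for pseudolinks, and why ordinary psyquandles suffice for singular links, which have no pI move. For p/sII, the relevant statement is the invertibility condition (p/si): pushing a strand across a singular crossing or precrossing forces certain local equations, and (p/si) says the maps $\alpha'_y,\beta'_y,S'$ are bijections, so colorings extend uniquely across the move; the somewhat subtle bookkeeping here is encoded in axiom (p/sii), whose four equations are precisely the statement that the "swap" of a classical strand past a singular/precrossing relates the before- and after-colorings, with the existence-and-uniqueness of $w,z$ giving the bijection. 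For p/sIII and p/sIII$'$, one reads off, for each of the six mixed exchange laws, the equality of a pair of triple-composite colors arising from sliding a strand past a configuration with one singular/precrossing and one classical crossing; these six identities are organized so that each of the two moves contributes three of them, one per choice of over/under and orientation of the sliding strand.

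The main obstacle is purely organizational rather than conceptual: there are many cases, because each singular move comes in several variants according to the orientations of the three strands and, for the classical crossing appearing in p/sIII and p/sIII$'$, its sign; one must be careful that the chosen generating set (Proposition~\ref{ourmoves}) together with the classical moves genuinely covers all of them, but this is exactly what Proposition~\ref{ourmoves} asserts, so it suffices to treat one representative of each of p/sII, p/sIII, p/sIII$'$ (and pI in the pI-adequate case) and invoke that proposition. I would therefore structure the proof as: (1) recall that the biquandle axioms give invariance under classical moves; (2) state the coloring rule near a singular/precrossing and extract from it the before/after color assignments for each psyquandle move; (3) match these using (p/si), (p/sii), (p/siii) and (for pI) pI-adequacy respectively, noting in each case that the matching is a bijection on colorings; and (4) conclude by Proposition~\ref{ourmoves} (and the pseudoknot move list) that invariance under the generating moves implies invariance under all moves, so $\Phi_X^{\mathbb{Z}}(L):=\#\{X\text{-colorings of }L\}$ is a well-defined invariant. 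I expect step (2)–(3) for p/sII to be where the notation is heaviest, since it is there that the four equations of (p/sii) must be read off the picture in the correct order.
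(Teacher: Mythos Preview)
Your proposal is correct and follows essentially the same approach as the paper: verify invariance move-by-move, handling the classical Reidemeister moves via the underlying biquandle structure and then checking pI, p/sII, p/sIII, p/sIII$'$ against the axioms (pI-adequacy), (p/si)/(p/sii), and (p/siii) respectively, with Proposition~\ref{ourmoves} justifying that this generating set suffices. The paper's proof is terser because it leans on the diagrams to display the before/after colorings, but the logical structure---adjacent-labels rule to reduce to boundary data, then axiom-by-axiom matching---is the same as yours.
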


\begin{proof}
We verify for each of the moves pI, p/sII, p/sIII and p/sIII$'$. First, move
pI requires $x\ud x=x\od x$ for all $x\in X$:
\[\includegraphics{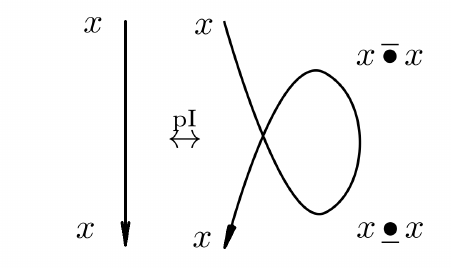}
\includegraphics{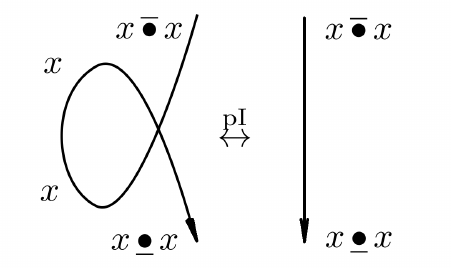}\]
Next, let us consider the p/sII move. 
\[\includegraphics{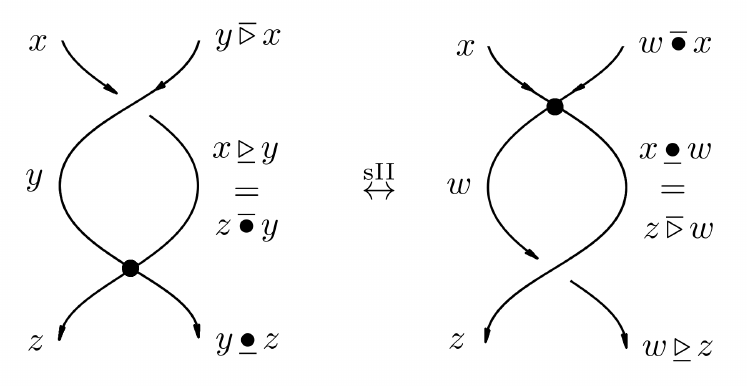}\]  
We want each $X$-coloring of the
diagram on the left to correspond to exactly one $X$-coloring of the
diagram on the right. The fact that $\utr,\otr,\ud$ and $\od$ satisfy the
adjacent labels rule implies that the colors $x,y$ determine all the 
semiarc colors in the left diagram and the requirement that colors agree on the
boundary of the neighborhood of the move the implies that $x,y$ also determine 
the colors in the right diagram. Then for each pair $x,y\in X$ 
there should be unique $z,w$ satisfying the pictured conditions.

Finally, for the p/sIII and p/sIII$'$ moves, we compare semiarc labels
on both sides of the moves.
\[\includegraphics{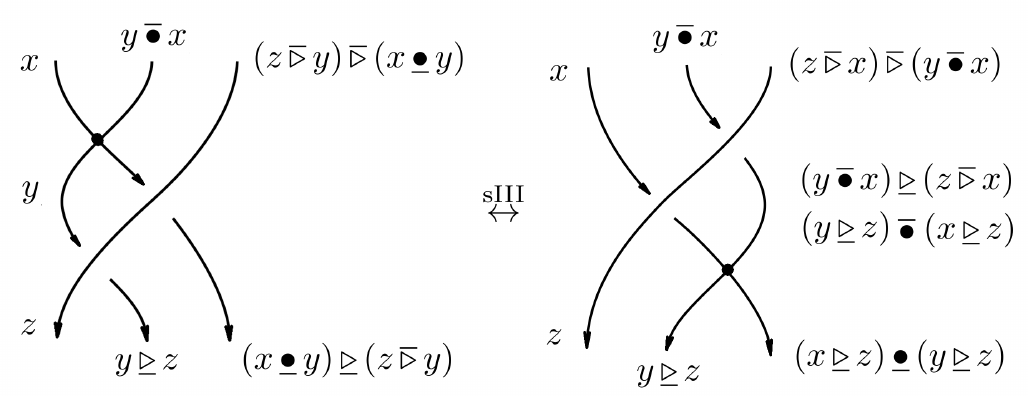}\] 
\[\includegraphics{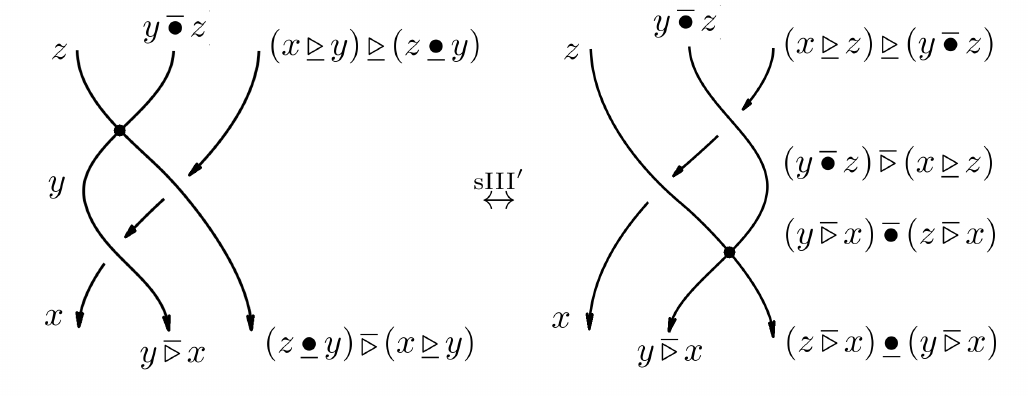}\] 
\end{proof}

\begin{example}
Let $X$ be a biquandle. Replacing the singular/precrossing with a positive 
crossing shows that that setting $\od=\otr$ and $\ud=\utr$ yields a pI-adequate
psyquandle, and replacing it with a negative crossing shows that setting
$\od=\utr$ and $\ud=\otr$ yields a pI-adequate psyquandle.
\end{example}

\begin{definition}
A \textbf{pure psyquandle} is a psyquandle with trivial classical operations,
i.e. a psyquandle $X$ such that
\[x\utr y=x\otr y =x\]
for all $x,y\in X$. We note that the mixed exchange laws are automatically 
satisfied in this case, so every pair of operations $x\ud y, x\od y$
satisfying (p/si) and (p/sii) is a pure psyquandle.
\end{definition}

\begin{example}
Let $X$ be a set and $\sigma,\tau:X\to X$ bijections. Then 
$x\utr y=x\otr y=\tau(x)$ defines a biquandle operation called a 
\textit{constant action biquandle}. Defining $x\ud y=x\od y=\sigma(x)$
makes this a pI-adequate psyquandle we call a \textit{constant action 
psyquandle} provided 
\[\sigma^{-1}\tau=\tau^{-1}\sigma \quad\mathrm{and}\quad \sigma\tau=\tau\sigma.\]
We verify the axioms:
\begin{itemize}
\item[(p/si)] $\alpha'=\beta'=\sigma$ is invertible and 
$S'^{-1}(x,y)=(\tau^{-1}(y),\tau^{-1}(x))$,
\item[(p/sii)] Given $x,y\in X$, define $z=\tau^{-1}\sigma(x)$ and 
$w=\tau^{-1}\sigma(y)$. Then we have
\[\begin{array}{rcccccl}
x\utr y &= & \sigma(x) & = & \tau(\tau^{-1}(\sigma(x))) & =& z\otr y \\
y\otr x &= & \sigma(y) & = & \tau(\tau^{-1}(\sigma(y))) & =& w\otr x \\
w\utr z &= & \sigma(\tau^{-1}(\sigma(y))) & = & \tau(y) & =& y\ud z \\
z\otr w &= & \sigma(\tau^{-1}(\sigma(x))) & = & \tau(x) & =& x\od w 
\end{array}\]
and
\item[(p/siii)] For all $x,y,z\in X$ we have
\[
\begin{array}{rcccccl}
(x\otr y)\otr(z\od y) & = & \sigma^2(x) & = & \sigma^2(x) & = & (x\otr z)\otr(y\ud z)\\
(x\utr y)\utr(z\od y) & = & \sigma^2(x) & = & \sigma^2(x) & = & (x\utr z)\utr(y\ud z)\\
(x\otr y)\od(z\otr y) & = & \sigma(\tau(x)) &= & \tau(\sigma(x)) & = & (x\od z)\otr(y\utr z)\\
(x\utr y)\ud(z\utr y) & = & \sigma(\tau(x)) &= & \tau(\sigma(x)) & = & (x\ud z)\utr(y\otr z)\\
(x\otr y)\ud(z\otr y) & = & \sigma(\tau(x)) &= & \tau(\sigma(x)) & = & (x\ud z)\otr(y\utr z)\\
(x\utr y)\od(z\utr y) & = & \sigma(\tau(x)) &= & \tau(\sigma(x)) & = & (x\od z)\utr(y\otr z)\\
\end{array}\]
as required.
\end{itemize}
\end{example}

\begin{example}
We can express a psyquandle structure on a finite set $X=\{x_1,\dots, x_n\}$ 
with an $n\times 4n$ matrix encoding the operation tables of 
$\utr,\otr,\ud,\od$ where the $(j,k)$ entry $m$ in the matrix satisfies
\[x_m=\left\{\begin{array}{ll}
x_j\utr x_k & 1\le k\le n \\
x_j\otr x_k & n+1\le k\le 2n \\
x_j\ud x_k & 2n+1\le k\le 3n \\
x_j\od x_k & 3n+1\le k\le 4n \\
\end{array}\right.\]
For instance, the constant action psyquandle on $X=\{x_1,x_2,x_3,x_4\}$ 
where $\sigma=(12)$ and $\tau=(34)$ has operation matrix
\[\left[\begin{array}{rrrr|rrrr|rrrr|rrrr}
2 & 2 & 2 & 2 & 2 & 2 & 2 & 2 & 1 & 1 & 1 & 1 & 1 & 1 & 1 & 1 \\
1 & 1 & 1 & 1 & 1 & 1 & 1 & 1 & 2 & 2 & 2 & 2 & 2 & 2 & 2 & 2 \\
3 & 3 & 3 & 3 & 3 & 3 & 3 & 3 & 4 & 4 & 4 & 4 & 4 & 4 & 4 & 4 \\
4 & 4 & 4 & 4 & 4 & 4 & 4 & 4 & 3 & 3 & 3 & 3 & 3 & 3 & 3 & 3 \\
\end{array}\right]\]
\end{example}

\begin{example}
Let $X=\{1,2,3\}$. The operation matrix
\[\left[\begin{array}{rrr|rrr|rrr|rrr}
1 & 1 & 1 & 1 & 1 & 1 & 2 & 2 & 2 & 3 & 2 & 2 \\
2 & 2 & 2 & 2 & 2 & 2 & 1 & 3 & 1 & 1 & 1 & 1\\
3 & 3 & 3 & 3 & 3 & 3 & 3 & 1 & 3 & 2 & 3 & 3
\end{array}\right]\]
defines a pure psyquandle which is not pI-adequate.
\end{example}

\begin{definition}
Let $D$ be an oriented singular or pseudolink diagram representing an
oriented singular or pseudolink $L$ and let $G=\{g_1,\dots, g_n\}$ be a set 
of symbols corresponding to the semiarcs in $D$. We define the 
\textit{fundamental psyquandle} of $D$ is the usual universal algebraic way, 
namely:
\begin{itemize}
\item The set $W(D)$ of \textit{psyquandle words} in $G$ is defined 
recursively by the rules 
\begin{itemize}
\item[(i)] $G\subset W(G)$ and
\item[(ii)] $x,y\in W(G)$ implies \[ x\utr y, x\otr y, x\ud y, x\od y,
\alpha^{-1}_y(x),\alpha'^{-1}_y(x),\beta^{-1}_y(x),\beta'^{-1}_y(x),\] 
\[S_1(x,y), S_2(x,y), S_1'(x,y), S_2'(x,y), w(x,y), z(x,y) \in W(G),\] 
\end{itemize}
\item We make an equivalence relation on $W(G)$ generated by relations 
representing the psyquandle axioms, e.g. 
\[(x\otr y)\otr(z\od y)\sim (x\otr z)\otr(y\ud z),\ x\utr y\sim z(x,y)\od y, \mathrm{etc.,} \]
\item The \textit{free psyquandle on $G$} is the set of equivalence classes of
$W(G)$ modulo this equivalence relation; if we include axiom (pi) we obtain 
the \textit{free pI-adequate psyquandle}, and
\item Including the crossing relations form Definition \ref{def:psy} in our 
equivalence relation yields the \textit{fundamental psyquandle} of $D$,
denoted $\mathcal{P}(D)$ or $\mathcal{P}_I(D)$ for the fundamental 
pI-adequate psyquandle.
\end{itemize}
\end{definition}

\begin{theorem}
The isomorphism class $\mathcal{P}(L)$ of $\mathcal{P}(D)$ is an invariant of 
oriented singular links, and the isomorphism class $\mathcal{P}_I(L)$ of 
$\mathcal{P}_I(D)$ is an invariant of oriented pseudolinks.
\end{theorem}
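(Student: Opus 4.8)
The plan is to show that the presentation of $\mathcal{P}(D)$ (resp. $\mathcal{P}_I(D)$) changes only by Tietze transformations when $D$ is altered by a single Reidemeister move, so that the isomorphism class does not depend on the chosen diagram. By Proposition \ref{ourmoves}, any two diagrams of the same oriented singular link are related by a finite sequence of oriented classical Reidemeister moves together with the moves p/sII, p/sIII and p/sIII$'$; for pseudolinks, by the pseudoknot Reidemeister moves recalled above, we additionally allow the move pI. Hence it suffices to treat each of these move types in turn, and we may argue entirely inside the disk in which the move takes place: the symbols attached to semiarcs meeting the boundary of that disk, and the relations coming from crossings outside it, are untouched, so it is enough to produce a bijection of the interior generators that is compatible with the boundary identifications and carries one set of local relations to the other modulo the psyquandle axioms.

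For the classical Reidemeister moves this is the standard fact that the fundamental biquandle is an invariant (see \cite{EN}); the operations $\ud,\od$ do not appear in those local pictures, so nothing new is required. For move pI, inserting or deleting a precrossing kink introduces or removes a single generator $y$ attached to the incoming semiarc $x$ by the two crossing relations $y=x\ud x$ and $y=x\od x$ coming from Definition \ref{def:psy}; eliminating $y$ using the first relation leaves behind exactly $x\ud x=x\od x$, which is the pI-adequacy axiom already imposed in $\mathcal{P}_I(D)$. Thus the presentations before and after a pI move are Tietze-equivalent, and this is the only move in which the pI-adequate axiom is needed, which is why it is imposed for pseudolinks but not for singular links.

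The essential cases are p/sII, p/sIII and p/sIII$'$, and these are precisely the diagrammatic content of axioms (p/sii) and (p/siii). For p/sII, reading off the crossing relations on the two sides of the move gives, on one side, all interior semiarc labels expressed in terms of the two input symbols $x,y$ via $\utr,\otr,\ud,\od$ and the inverse operations included in $W(G)$, and on the other side two new generators $w,z$ subject to the four equations displayed in (p/sii); axiom (p/sii) asserts exactly that these four equations determine $w$ and $z$ uniquely in terms of $x,y$, so eliminating $w$ and $z$ is a legitimate Tietze move that recovers the other presentation. For p/sIII and p/sIII$'$ all semiarcs are labeled by psyquandle words in the three input symbols $x,y,z$; comparing outputs on the two sides of each move produces precisely the six mixed exchange laws of (p/siii), together with ordinary exchange laws already present, so the two local relation sets generate the same congruence and the presentations agree. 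In each case the eliminated generators are interior semiarcs and the surviving ones meet the boundary, so the resulting bijection respects the boundary identifications and extends to an isomorphism of fundamental psyquandles.

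The main obstacle is bookkeeping rather than anything conceptual: one must orient and label every semiarc in each local picture consistently with the coloring rule of Definition \ref{def:psy}, track which word is carried by which arc across the move, and check that the list of relations obtained on each side matches axioms (p/sii) and (p/siii) on the nose, in particular that no extra relation is created and that the inverse operations $\alpha_y^{-1},\alpha_y'^{-1},\beta_y^{-1},\beta_y'^{-1},S_i,S_i'$ available in $W(G)$ make every elimination reversible. Once this local verification is completed for the finitely many moves of Proposition \ref{ourmoves} (plus pI in the pseudolink case), invariance of $\mathcal{P}(L)$ and $\mathcal{P}_I(L)$ follows.
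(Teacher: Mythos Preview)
Your proposal is correct and follows essentially the same approach as the paper: Reidemeister moves on $D$ induce Tietze moves on the presentation of $\mathcal{P}(D)$ (resp.\ $\mathcal{P}_I(D)$), hence yield isomorphic psyquandles. The paper states this in a single sentence, whereas you have unpacked the local verification for each move type, but the underlying argument is identical.
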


\begin{proof}
By construction, Reidemeister moves on diagrams induce Tietze moves on 
presentations of $\mathcal{P}(D)$ and $\mathcal{P}_I(D)$ respectively, 
resulting in isomorphic psyquandles.
\end{proof}

Psyquandles form a category with psyquandles as objects and \textit{psyquandle
homomorphisms}, maps $f:X\to Y$ satisfying
\[f(x\utr y)=f(x)\utr f(y),\quad f(x\otr y)=f(x)\otr f(y),\quad
f(x\ud y)=f(x)\ud f(y)\quad \mathrm{and}\quad
f(x\od y)=f(x)\od f(y)\]
as morphisms.

Let $D$ be an oriented singular link or pseudolink diagram and let $X$ be a 
finite psyquandle. An assignment of elements of $X$ to the semiarcs in $D$
defines a homomorphism $f:\mathcal{P}(D)\to X$ if and only if the coloring
conditions in Definition \ref{def:psy} are satisfied at every crossing;
we will refer to such an assignment as an \textit{$X$-coloring} of $D$.
Thus, we can compute the the set of psyquandle homomorphisms 
$\mathrm{Hom}(\mathcal{P}(L),X)$ for an oriented singular link or pseudolink 
$L$ by computing the set of $X$-colorings of a diagram $D$ representing $L$.
More precisely, fixing an ordering of the semiarcs in $D$ gives us a way
to represent homomorphisms $f\in\mathrm{Hom}(\mathcal{P}(L),X)$ concretely
as ordered tuples of elements of $X$.
The number of such colorings is an integer-valued invariant of singular 
links and pseudolinks we call the \textit{psyquandle counting invariant},
denoted $\Phi_X^{\mathbb(Z)}(L)=|\mathrm{Hom}(\mathcal{P}(L),X)|$.

\begin{example}\label{ex:11l}
Consider the psyquandle $X$ with operation matrix
\[\left[\begin{array}{rr|rr|rr|rr}
2 & 2 & 2 & 2 & 2 & 2 & 2 & 2 \\
1 & 1 & 1 & 1 & 1 & 1 & 1 & 1 
\end{array}\right].
\]
The $2$-bouquet graph $1_1^l$ below 
\[\includegraphics{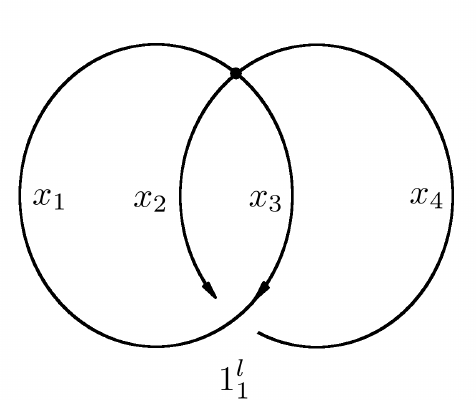}\]
has 4 $X$-colorings, each of which we can identify explicitly
as a $4$-tuple $(f(x_1),f(x_2), f(x_3), f(x_4))$:
\[\mathrm{Hom}(\mathcal{P}(1_1^l),X)=\{(1,1,2,2),(1,2,2,1),(2,1,2,1),(2,2,1,1)\}.\]
This distinguishes this link from the $2$-bouquet graph $0_1^k$
\[\includegraphics{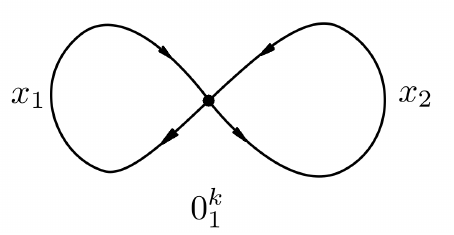}\]
which has only two $X$-colorings
\[\mathrm{Hom}(\mathcal{P}(0_1^k),X)=\{(1,2),(2,1)\}.\]
\end{example}

\begin{example}\label{ex:tbg}
Using our custom \texttt{Python} code, we computed the counting invariant 
for the 2-bouquet graphs (with choices of orientation) in \cite{O} using the 
psyquandle with operation matrix
\[\left[
\begin{array}{rrrrrr|rrrrrr|rrrrrr|rrrrrr}
2& 4& 4& 6& 6& 2& 2& 6& 2& 6& 2& 6& 2& 4& 2& 6& 2& 2& 2& 6& 4& 6& 6& 6 \\
3& 5& 5& 1& 1& 3& 1& 5& 1& 5& 1& 5& 3& 5& 5& 5& 1& 5& 1& 5& 1& 1& 1& 3 \\ 
4& 6& 6& 2& 2& 4& 6& 4& 6& 4& 6& 4& 6& 6& 6& 2& 6& 4& 4& 4& 6& 4& 2& 4 \\
5& 1& 1& 3& 3& 5& 5& 3& 5& 3& 5& 3& 5& 3& 1& 3& 3& 3& 5& 1& 5& 3& 5& 5 \\
6& 2& 2& 4& 4& 6& 4& 2& 4& 2& 4& 2& 4& 2& 4& 4& 4& 6& 6& 2& 2& 2& 4& 2 \\
1& 3& 3& 5& 5& 1& 3& 1& 3& 1& 3& 1& 1& 1& 3& 1& 5& 1& 3& 3& 3& 5& 3& 1 \\
\end{array}\right].\]
The results are collected in the table.
\[
\begin{array}{r|l}
\Phi_X^{\mathbb{Z}}(L) & L \\ \hline 
6 & 1_1^k, 3_1^k, 4_1^k, 4_2^k, 5_1^k, 5_4^k, 5_5^k, 5_6^k, 5_7^k, 5_8^k, 6_1^k, 6_2^k, 6_3^k, 6_4^k, 6_5^k, 6_6^k, 6_8^k, 6_9^k, 6_{10}^k, 6_{11}^k,6_{12}^k, 6_{13}^k, 6_{14}^k, 6_{15}^k, 6_{18}^k \\
8 & 5_3^l, 6_5^l \\
12 & 3_1^l,4_1^l,  5_2^l, 5_3^l, 6_1^l, 6_2^l, 6_6^l \\
18 & 2_1^k, 5_2^k, 5_3^k, 6_7^k, 6_{16}^k, 6_{17}^k, 6_{19}^k \\ 
24 & 1_1^l, 5_1^l, 6_3^l,6_5^l,  6_7^l, 6_8^l, 6_9^l, 6_{10}^l, 6_{11}^l \\
36 & 6_4^l, 6_{12}^l.
\end{array}
\]
\end{example}

\begin{example}
Noticing that the psyquandle in example \ref{ex:tbg} is pI-adequate since the
two right blocks have the same diagonal, we computed the counting invariant for
a choice of orientations for the pseudoknots in \cite{HHJJMR}. The results are 
collected in the table.
\[
\begin{array}{r|l}
\Phi_X^{\mathbb{Z}}(L) & L \\ \hline 
6 & 3_1.2, 3_1.3, 4_1.4, 4_1.3, 4_1.4, 4_1.5, 5_1.1, 5_1.3, 5_1.4, 5_2.1, 5_2.2,5_2.3,5_2.4, 5_2.5, 5_2.6, 5_2.7, 5_2.8, 5_2.9, 5_2.10 \\
18 & 3_1.1, 4_1.1, 5_1.2, 5_1.5.
\end{array}\]
\end{example}

\section{\Large\textbf{Alexander Psyquandles}}\label{A}

Let $\Lambda=\mathbb{Z}[t^{\pm 1},s^{\pm 1}]$. Any $\Lambda$-module 
$X$ is a biquandle under the operations
\[x\utr y=tx+(s-t)y\quad\mathrm{and}\quad x\otr y=sx\]
known as an \textit{Alexander biquandle}
(see \cite{EN} or \cite{KR}). Interpreting the fundamental biquandle of a knot
or link as an Alexander biquandle yields invariants including the Alexander 
polynomials and generalizations such as the Sawollek polynomials \cite{KR,S} 
and the Alexander-Gr\"obner invariants \cite{CHN}. In this section we will 
extend this definition to the case of psyquandles and as an application define 
notions of Alexander polynomials, Alexander-Gr\"obner invariants and a special
case we call \textit{Jablan polynomials} for singular and pseudoknots and links.

\begin{proposition}
Let $\Lambda'=\mathbb{Z}[t^{\pm 1}, s^{\pm 1}, a^{\pm 1},b^{\pm 1}]/(s+t-a-b)$ and 
let $X$ be a $\Lambda'$-module. The operations
\begin{eqnarray*}
x\utr y & = & tx+(s-t)y \\
x\otr y & = & sx \\
x\ud y & = &  ax+(s-a)y \\
x\od y & = &  bx+(s-b)y
\end{eqnarray*}
make $X$ a pI-adequate psyquandle called an \textit{Alexander psyquandle}.
\end{proposition}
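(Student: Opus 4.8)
The plan is to reduce every condition to $\Lambda'$-linear algebra, since all four operations are $\Lambda'$-linear in each variable, and then to track where the defining relation $s+t-a-b=0$ is genuinely needed. First note that the classical operations $x\utr y=tx+(s-t)y$ and $x\otr y=sx$ are exactly the Alexander biquandle operations, so biquandle axioms (i)--(iii) hold by \cite{EN,KR}; it therefore suffices to verify pI-adequacy, (p/si), (p/sii) and (p/siii). The pI-adequacy condition is immediate: $x\ud x=ax+(s-a)x=sx=bx+(s-b)x=x\od x$, and this does not even use the relation.

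For (p/si), the maps $\alpha'_y(x)=bx+(s-b)y$ and $\beta'_y(x)=ax+(s-a)y$ are invertible because $a$ and $b$ are units in $\Lambda'$, with inverses $x\mapsto b^{-1}(x-(s-b)y)$ and $x\mapsto a^{-1}(x-(s-a)y)$. Writing $S'$ in the coordinates $(x,y)$ as the $\Lambda'$-linear endomorphism of $X\times X$ with matrix $\left(\begin{smallmatrix} s-b & b \\ a & s-a\end{smallmatrix}\right)$, its determinant is $(s-a)(s-b)-ab=s^2-s(a+b)$, which the relation $s+t-a-b=0$ turns into $-st$; this is a unit, so $S'$ is invertible.

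The substantial step is (p/sii), which I would handle by solving the system explicitly. The first two equations $x\utr y=z\od y$ and $y\otr x=w\od x$ read $bz+(s-b)y=tx+(s-t)y$ and $bw+(s-b)x=sy$; since $b$ is a unit these have the unique solution $z=b^{-1}(tx+(b-t)y)$, $w=b^{-1}(sy-(s-b)x)$, which settles both uniqueness and existence provided the remaining two equations hold automatically. Substituting these values into $w\utr z=y\ud z$ and $z\otr w=x\ud w$ and clearing the factor $b$, one obtains $t(sy-(s-b)x)=aby+(t-a)(tx+(b-t)y)$ and $s(tx+(b-t)y)=abx+(s-a)(sy-(s-b)x)$; comparing the coefficients of $x$ and of $y$ in each, every resulting equation is a restatement of $s+t-a-b=0$ (for instance the $x$-coefficients in the first give $-t(s-b)=t(t-a)$, i.e.\ $b-s=t-a$). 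This coefficient bookkeeping is where existence and uniqueness truly depend on the module relation, so I expect it to be the main obstacle, though it is still a routine computation.

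Finally, for (p/siii) I would expand all six mixed exchange laws. Five of them are identities already over $\mathbb{Z}[t,s,a,b]$: three collapse immediately because $x\otr y=sx$ discards its second argument --- for instance both sides of the first law equal $s^2x$ and both sides of the third equal $sbx+s(s-b)z$ --- while the fourth and sixth reduce on each side to $atx+t(s-a)z+s(s-t)y$ and to $btx+t(s-b)z+s(s-t)y$ respectively. Only $(x\utr y)\utr(z\od y)=(x\utr z)\utr(y\ud z)$ uses the relation: it expands to $t^2x+b(s-t)z+(s-t)(t+s-b)y$ on the left and $t^2x+(s-t)(t+s-a)z+a(s-t)y$ on the right, and $t+s-b=a$, $t+s-a=b$ (both equivalent to $s+t-a-b=0$) make the two sides equal. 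Assembling the verifications of pI-adequacy, (p/si), (p/sii) and (p/siii) completes the proof that $X$ is a pI-adequate psyquandle.
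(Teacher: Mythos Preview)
Your proof is correct and follows essentially the same direct-verification approach as the paper's own proof. Your determinant argument for the invertibility of $S'$ (in place of the paper's explicit inverse formula) and your careful tracking of exactly which axioms require the relation $s+t=a+b$ are nice streamlinings, but the overall structure---verify pI-adequacy, then (p/si), then solve explicitly for $w,z$ in (p/sii), then expand the six mixed exchange laws---matches the paper.
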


\begin{proof}
We verify the axioms. First, checking pI-adequacy, we have
\[x\od x = bx+(s-b)x=sx=ax+(s-a)x=x\ud x.\]
Next, for axiom (p/si) if we define 
\begin{eqnarray*}
\alpha'_y(x) & = & bx+ (s-b)y \\
\beta'_y(x) & = & ax+(s-a)y \\
S'(x,y) & = & ((s-b)x+by,\ ax+(s-a)y)
\end{eqnarray*}
then setting
\begin{eqnarray*}
\alpha'^{-1}_y(x) & = & b^{-1}(x-(s-b)y) \\
\beta'^{-1}_y(x) & = & a^{-1}(x-(s-a)y) \\
S'^{-1}(x,y) & = & 
((s^{-1}-bs^{-1}t^{-1})x+bs^{-1}t^{-1}y,\ as^{-1}t^{-1}x+(s^{-1}-as^{-1}t^{-1})y)
\end{eqnarray*}
yields the inverse maps. Let us verify:
\begin{eqnarray*}
\alpha'_y(bx+(s-b)y) & = & b^{-1}(bx+(s-b)y-(s-b)y))= x \\
\beta'^{-1}_y(ax+(s-a)y) & = & a^{-1}(ax+(s-a)y-(s-a)y) =x \\
\end{eqnarray*}
and writing 
\[S'^{-1}((s-b)x+by,\ ax+(s-a)y) = (Ax+By,Cx+Dy)\]
we compute
\begin{eqnarray*}
A & = & (s^{-1}-bs^{-1}t^{-1})(s-b)+abs^{-1}t^{-1} \\
 & = & 1-bs^{-1}-bt^{-1}+b^2s^{-1}t^{-1}+abs^{-1}t^{-1} \\
 & = & 1-bs^{-1}-bt^{-1}+(s+t-a)bs^{-1}t^{-1}+abs^{-1}t^{-1} \\
 & = & 1-bs^{-1}-bt^{-1}+bt^{-1}+bs^{-1}-abs^{-1}t^{-1}+abs^{-1}t^{-1} \\
 & = & 1 
\end{eqnarray*}\begin{eqnarray*}
B & = & (s^{-1}-bs^{-1}t^{-1})b+bs^{-1}t^{-1}(s-a)\\ 
 & = & bs^{-1}-b^2s^{-1}t^{-1}+bt^{-1}-abs^{-1}t^{-1} \\
 & = & bs^{-1}-(s+t-a)bs^{-1}t^{-1}+bt^{-1}-abs^{-1}t^{-1} \\
 & = & bs^{-1}-bt^{-1}+bs^{-1}+abs^{-1}t^{-1}+bt^{-1}-abs^{-1}t^{-1} \\
 &= & 0
\end{eqnarray*}\begin{eqnarray*}
C & = & as^{-1}t^{-1}(s-b)+(s^{-1}-as^{-1}t^{-1})a \\
& = & at^{-1}-abs^{-1}t^{-1}+(s^{-1}-(s+t-b)s^{-1}t^{-1})a \\
& = & at^{-1}-abs^{-1}t^{-1}+as^{-1}-at^{-1} -as^{-1}+abs^{-1}t^{-1} \\
& =& 0\quad \mathrm{and}
\end{eqnarray*}\begin{eqnarray*}
D & = &  abs^{-1}t^{-1}+(s^{-1}-as^{-1}t^{-1})(s-a) \\
& = & abs^{-1}t^{-1}+1-at^{-1}-as^{-1}+a^2s^{-1}t^{-1}) \\
& = & abs^{-1}t^{-1}+1-at^{-1}-as^{-1}+a(s+t-b)s^{-1}t^{-1}) \\
& = & abs^{-1}t^{-1}+1-at^{-1}-as^{-1}+at^{-1}+as^{-1}-abs^{-1}t^{-1} \\
& = & 1
\end{eqnarray*}
and axiom (p/si) is satisfied.

To verify axiom (p/sii), we observe that given $x,y$ we can define
\begin{eqnarray*}
w & = & b^{-1}(b-s)x +b^{-1}sy\\
z & = & b^{-1}tx + b^{-1}(s-a)y
\end{eqnarray*}
and then we have
\begin{eqnarray*}
bw+(s-b)x 
& = & b(b^{-1}(b-s)x+b^{-1}sy)+(s-b)x \\
& = & (b-s)x+(s-b)x+sy\\
& = & sy, 
\end{eqnarray*}\begin{eqnarray*}
tw+(s-t)z 
& = & t(b^{-1}(b-s)x +b^{-1}sy)+(s-t)(b^{-1}tx + b^{-1}(s-a)y)\\
& = & b^{-1}(tb-ts+ts-t^2)x+b^{-1}(st+s^2-st-as+at)y \\
& = & b^{-1}t(b-t)x+b^{-1}(s^2-as+at)y \\
& = & (s-a)b^{-1}tx + b^{-1}(s^2-2as+a^2 +as+at-a^2)y \\
& = & (s-a)b^{-1}tx + b^{-1}(s^2-2as+a^2 +a(s+t-a))y \\
& = & (s-a)b^{-1}tx + (b^{-1}(s-a)^2 +a)y \\
& = & ay+(s-a)(b^{-1}tx + b^{-1}(s-a)y) \\
& = & ay+(s-a)z, 
\end{eqnarray*}\begin{eqnarray*}
bz+(s-b)y 
& = & b(b^{-1}tx + b^{-1}(s-a)y)+(s-b)y \\
& = & tx + (s-a)y+(a-t)y \\
& = & tx+(s-t)y \quad\mathrm{and} 
\end{eqnarray*}\begin{eqnarray*}
ax+(s-a)w 
& = & ax+(s-a)(b^{-1}(b-s)x +b^{-1}sy) \\
& = & b^{-1}(ab+(s-a)(b-s))x +b^{-1}(s-a)sy \\
& = & b^{-1}(ab+sb-ab-s^2+as)x +b^{-1}(s-a)sy \\
& = & b^{-1}(s(b-s+a)x +b^{-1}(s-a)sy \\
& = & sb^{-1}tx + b^{-1}(s-a)y\\
& = & sz 
\end{eqnarray*}
as required.

Finally, for axiom (p/siii) we verify each of the mixed exchange laws:
\begin{eqnarray*}
(x\otr y)\otr(z\od y) & = & s(sx) \\ & = & (x\otr z)\otr(y\ud z),
\end{eqnarray*}\begin{eqnarray*}
(x\utr y)\utr(z\od y) 
& = & t(tx+(s-t)y)+(s-t)((s+t-a)z+(a-t)y) \\
&= & t^2x+(t(s-t)+(s-t)(a-t))y+(s-t)(s+t-a)z \\
& = & t(tx+(s-t)z)+(s-t)(ay+(s-a)z)\\ & = &(x\utr z)\utr(y\ud z),
\end{eqnarray*}\begin{eqnarray*}
(x\otr y)\od(z\otr y) 
& = & b(sx)+(s-b)(sz) \\
& = & s(bx+(s-b)z)\\ &=&(x\od z)\otr(y\utr z),
\end{eqnarray*}\begin{eqnarray*}
(x\utr y)\ud(z\utr y) 
& = &  a(tx+(s-t)y)+(s-a)(tz+(s-t)y) \\
& = &  t(ax+(s-a)z +(s-t)(sy) \\ 
& =& (x\ud z)\utr(y\otr z),
\end{eqnarray*}\begin{eqnarray*}
(x\otr y)\ud(z\otr y) 
& = & a(sx)+(s-a)(sz) \\
&= & s(ax+(s-a)z) \\ & = & (x\ud z)\otr(y\utr z) \quad\mathrm{and}
\end{eqnarray*}\begin{eqnarray*}
(x\utr y)\od(z\utr y) 
& = & b(tx+(s-t)y)+(s-b)(tz+(s-t)y)\\
& = & t(bx+(s-b)z)+(s-t)(sy)\\
& = & (x\od z)\utr(y\otr z)\\
\end{eqnarray*}
as required.
\end{proof}

\begin{example}
We can define finite psyquandles by selecting units $s,t,a,b\in\mathbb{Z}_n$
such that $s+t=a+b$. For instance, in $\mathbb{Z}_5$ we can select
$s=2$, $t=3$, $a=4$ and $b=1$; then $s+t=2+3=0=1+4$ and we have an Alexander 
psyquandle with operations
\[
\begin{array}{rcl}
x\utr y & = & 3x+4y \\
x\otr y & = & 2x
\end{array}\quad
\begin{array}{rcl}
x\ud y & = & 4x+4y \\
x\od y & = & x+y
\end{array}
\]
and operation matrix
\[
\left[\begin{array}{rrrrr|rrrrr|rrrrr|rrrrr}
2 & 1 & 5 & 4 & 3 & 2 & 2 & 2 & 2 & 2 & 3 & 2 & 1 & 5 & 4 & 2 & 3 & 4 & 5 & 1\\
5 & 4 & 3 & 2 & 1 & 4 & 4 & 4 & 4 & 4 & 2 & 1 & 5 & 4 & 3 & 3 & 4 & 5 & 1 & 2\\
3 & 2 & 1 & 5 & 4 & 1 & 1 & 1 & 1 & 1 & 1 & 5 & 4 & 3 & 2 & 4 & 5 & 1 & 2 & 3\\
1 & 5 & 4 & 3 & 2 & 3 & 3 & 3 & 3 & 3 & 5 & 4 & 3 & 2 & 1 & 5 & 1 & 2 & 3 & 4\\
4 & 3 & 2 & 1 & 5 & 5 & 5 & 5 & 5 & 5 & 4 & 3 & 2 & 1 & 5 & 1 & 2 & 3 & 4 & 5
\end{array}\right]
\]
where we use $5$ as the class of zero in $\mathbb{Z}_5$.
\end{example}

\begin{example}
If $X$ is a commutative ring with identity in which $2$ is invertible, we
can set $a=b=\frac{s+t}{2}$ to get pI-adequate psyquandle operations
\[x\ud y = \displaystyle \frac{s+t}{2}x+\frac{s-t}{2}y=x\od y.\]
We can interpret these operations as averaging the two possible
classical resolutions of an oriented precrossing. We call this type of 
psyquandle a \textit{Jablan psyquandle} since it was originally inspired by
Slavik Jablan's notion of precrossings as averages of two classical crossings.
For instance, in $X=\mathbb{Z}_5$ choosing $s=2$ and $t=4$ yields
\[x\ud y=3x+ y=x\od y.\]
\end{example}

\begin{example}
We can compute $\Phi_X^{\mathbb{Z}}$ for a singular link or pseudolink using
linear algebra when $X$ is an Alexander psyquandle. For example, the 
pseudoknot
\[\includegraphics{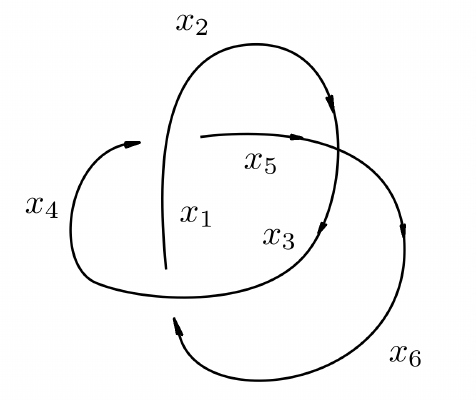}\]
has system of coloring equations given by 
\[\begin{array}{rcl}
sx_1 & = & x_2 \\
tx_4+(s-t) x_1 & = & x_5 \\
sx_3 & = & x_4 \\
tx_6 + (s-t) x_3 &=& x_1 \\
ax_5+(s-a)x_3 & =& x_6 \\
(s+t-a)x_3+(a-t) x_5 & = & x_2.
\end{array}\]
Choosing as a coloring psyquandle $X=\mathbb{Z}_5$ with $s=3$, $t=1$, $a=2$
and $b=2$,
this becomes
\[\begin{array}{rcl}
3x_1 +4x_2 & = & 0 \\
2x_1 +x_4+4x_5 & = & 0 \\
3x_3 +4x_4 & = & 0 \\
4x_1 +2x_3 +x_6 &=& 0 \\
x_3+ 2x_5 +4x_6 & =& 0 \\
4x_2+ 2x_3+ x_5 & = &0
\end{array}\]
which we can solve by row-reduction over $\mathbb{Z}_5:$
\[
\left[\begin{array}{rrrrrr}
3 & 4 & 0 & 0 & 0 & 0 \\ 
2 & 0 & 0 & 1 & 4 & 0 \\ 
0 & 0 & 3 & 4 & 0 & 0 \\ 
4 & 0 & 2 & 0 & 0 & 1 \\ 
0 & 0 & 1 & 0 & 2 & 4 \\ 
0 & 4 & 2 & 0 & 1 & 0 \\ 
\end{array}\right]
\rightarrow
\left[\begin{array}{rrrrrr}
1 & 4 & 0 & 4 & 1 & 0 \\ 
0 & 1 & 0 & 4 & 1 & 0 \\ 
0 & 0 & 1 & 0 & 2 & 4 \\ 
0 & 0 & 0 & 1 & 1 & 2 \\ 
0 & 0 & 0 & 0 & 1 & 0 \\ 
0 & 0 & 0 & 0 & 0 & 1 \\ 
\end{array}\right]
\]
so $\mathrm{dim} (\mathrm{ker}(A))=0$ and $\Phi_X^{\mathbb{Z}}(L)=1.$
Since the unknot has $\Phi_X^{\mathbb{Z}}(0_1)=5\ne 1$, this invariant
detects the (pseudo)knottedness of $L$.
\end{example}

Let $X=\mathbb{Z}_p$ and set $s=1$ and $t=-1$ so we have
\[x\utr y = -x+(1-(-1))y=2y-x\quad\mathrm{and}\quad x\otr y= x.\]
Colorings of classical knots and links by this type of biquandle are known as 
\textit{$p$-colorings}. 
Let us denote by $X_p$ and $X_p'$ respectively the Alexander psyquandle
structures on $X$ with $s=1$, $t=-1$, $a=1$ and $b=-1$ and 
$s=1$, $t=-1$, $a=-1$ and $b=1$
respectively. Observe that $X_p$ satisfies $x\utr y=x\ud y$ and $x\otr y=x\od y$
while $X_p'$ satisfies $x\utr y=x\od y$ and $x\otr y=x\ud y$. In particular,
we have the following observation:

\begin{observation}
An $X_p$-coloring of a pseudolink diagram $D$ coincides with a $p$-coloring
of the positive resolution of $D$, while an $X_p'$-coloring coincides with
a $p$-coloring of the negative resolution of $D$.
\end{observation}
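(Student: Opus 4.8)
The plan is to show that the system of coloring conditions defining an $X_p$-coloring of a pseudolink diagram $D$ is, semiarc for semiarc, literally the same system as the one defining a $p$-coloring of the positive resolution $D_+$ (the classical diagram obtained by replacing every precrossing of $D$ with a positive crossing), so that the two coloring sets coincide and in particular are equinumerous. First I would record the relevant data. Both $X_p$ and $X_p'$ are Alexander psyquandles, hence pI-adequate by the Proposition, so $\Phi^{\mathbb{Z}}_{X_p}$ and $\Phi^{\mathbb{Z}}_{X_p'}$ are defined on pseudolinks. Their classical operations are $x\utr y=2y-x$ and $x\otr y=x$, which are exactly the Fox $p$-coloring relations; and, as noted just above the statement, the singular operations satisfy $x\ud y=x\utr y$, $x\od y=x\otr y$ for $X_p$, and $x\ud y=x\otr y$, $x\od y=x\utr y$ for $X_p'$. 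I would also observe that resolving a precrossing leaves the underlying $4$-valent diagram graph unchanged, so the semiarcs of $D$ are in a canonical bijection with those of $D_+$ (and of $D_-$); under this bijection an assignment of elements of $X$ to the semiarcs of $D$ is the same datum as an assignment to the semiarcs of $D_+$.

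Next I would compare the two coloring rules, crossing by crossing. At each classical crossing of $D$ --- which appears unchanged in $D_+$ --- Definition \ref{def:psy} imposes the relations built from $\utr$ and $\otr$, i.e.\ the $p$-coloring relations, on both sides. At a precrossing $c$ of $D$, Definition \ref{def:psy} imposes relations built from $\ud$ and $\od$; but by the earlier Example, in any psyquandle satisfying $\ud=\utr$ and $\od=\otr$ a precrossing is colored by exactly the rule of a positive classical crossing, and in any psyquandle satisfying $\ud=\otr$ and $\od=\utr$ by exactly the rule of a negative classical crossing. Since $X_p$ satisfies the first pair of identities, the $X_p$-coloring relation at $c$ coincides with the $p$-coloring relation that $D_+$ carries at the crossing obtained by positively resolving $c$. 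Ranging over all crossings and precrossings, the defining systems agree, so $\mathrm{Hom}(\mathcal{P}_I(D),X_p)$ equals, as a set of tuples, the set of $p$-colorings of $D_+$. The statement for $X_p'$ and the negative resolution $D_-$ follows by the identical argument, since $X_p'$ satisfies $\ud=\otr$ and $\od=\utr$.

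The argument is essentially a matter of bookkeeping; the one point that deserves genuine care --- the main, and modest, obstacle --- is the orientation conventions. Concretely, one must check that after the substitution $\ud\mapsto\utr$, $\od\mapsto\otr$ the incoming/outgoing semiarc assignment of the precrossing coloring rule in Definition \ref{def:psy} agrees with the assignment at a genuinely \emph{positive} classical crossing rather than at its mirror, so that it is indeed the \emph{positive} resolution that appears for $X_p$ (and dually the negative one for $X_p'$). This is a single finite verification against the figures in Definition \ref{def:psy} and requires no new idea.
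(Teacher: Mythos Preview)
Your proposal is correct and follows the same reasoning the paper uses: the paper does not give a formal proof of the Observation but simply records, immediately before it, that $X_p$ satisfies $x\ud y=x\utr y$ and $x\od y=x\otr y$ while $X_p'$ satisfies the swapped identities, and states the Observation as an immediate consequence (together with the earlier Example noting that $\ud=\utr$, $\od=\otr$ corresponds to the positive resolution and $\ud=\otr$, $\od=\utr$ to the negative one). Your write-up is a careful unpacking of exactly that argument, including the appropriate caveat about matching orientation conventions at the precrossing.
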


In \cite{HJ}, two notions of $p$-colorability of pseudolinks were introduced. 
More precisely, a pseudolink $L$ is \textit{$p$-colorable} if 
every resolution of $L$ is $p$-colorable. 
A \textit{strong $p$-coloring} is a $p$-coloring at classical crossings such 
that at every precrossing, all four semiarcs have the same color.

\begin{lemma} Let $p\in \mathbb{Z}$ be odd.
A coloring of a pseudolink diagram which is both an $X_p$-coloring and an
$X_p'$-coloring is a strong $p$-coloring.
\end{lemma}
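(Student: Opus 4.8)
The plan is to compare, crossing by crossing, the constraints that the $X_p$-structure and the $X_p'$-structure each place on a coloring of the pseudolink diagram $D$, and to show that demanding both at once forces every precrossing to be monochromatic, which together with the classical behavior is exactly the definition of a strong $p$-coloring.

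First I would dispose of the classical crossings. The psyquandles $X_p$ and $X_p'$ have the same underlying biquandle, namely the one with $x\utr y = 2y-x$ and $x\otr y = x$, so at every classical crossing the $X_p$-coloring condition and the $X_p'$-coloring condition coincide, and each is literally the defining relation of a $p$-coloring. Hence a coloring that is an $X_p$-coloring (in particular one that is both an $X_p$- and an $X_p'$-coloring) is a $p$-coloring at all classical crossings; that is the first half of ``strong $p$-coloring'', and it needs nothing further. All the remaining work is at the precrossings.

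Next I would write out the precrossing rule from Definition \ref{def:psy}: at a precrossing with incoming semiarc colors $x$ and $y$, the two outgoing semiarcs receive colors $x\ud y$ and $y\od x$ in the positions dictated by the figure there. By the observation preceding the lemma, for $X_p$ one has $\ud=\utr$ and $\od=\otr$, so the two outgoing colors are $2y-x$ and $y$; for $X_p'$ one has $\ud=\otr$ and $\od=\utr$, so the two outgoing colors are $x$ and $2x-y$. A coloring that is simultaneously an $X_p$- and an $X_p'$-coloring must realize \emph{both} prescriptions with the same outgoing labels, which gives $2y-x=x$ and $y=2x-y$, hence $2(x-y)=0$ in $\mathbb{Z}_p$. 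This is the only place where oddness of $p$ is used: since $\gcd(2,p)=1$, the element $2$ is a unit and $x=y$; feeding this back, both outgoing colors also equal $x=y$. Thus all four semiarcs meeting at the precrossing carry a single color, which is the second half of ``strong $p$-coloring'', and combining the two paragraphs completes the argument.

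The proof is short, and the only thing requiring care — the ``main obstacle'', such as it is — is bookkeeping the precrossing convention: which of $\ud,\od$ decorates which outgoing strand, and in which order its arguments appear. The whole mechanism is that passing from $X_p$ to $X_p'$ swaps the roles of $\ud$ and $\od$, so one must fix the labeling from Definition \ref{def:psy} at the outset and confirm that the two resulting systems really are ``the $p$-coloring relation'' and ``the trivial relation'' with the roles interchanged. It is worth noting in passing that the converse also holds — a constant value $v$ on all four semiarcs at a precrossing satisfies $v\ud v = v = v\od v$ in both structures — so being a strong $p$-coloring is in fact equivalent to being both an $X_p$- and an $X_p'$-coloring, although the lemma only asserts the one direction.
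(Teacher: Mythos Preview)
Your proof is correct and follows essentially the same route as the paper's: at a precrossing, compare the outgoing labels forced by $X_p$ versus $X_p'$, obtain $2x=2y$, invoke oddness of $p$ to invert $2$, and conclude all four semiarcs agree. The paper's version is terser (it omits the discussion of classical crossings and the converse remark), but the key computation and the use of the hypothesis are identical.
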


\begin{proof}
At precrossings we have
\[\includegraphics{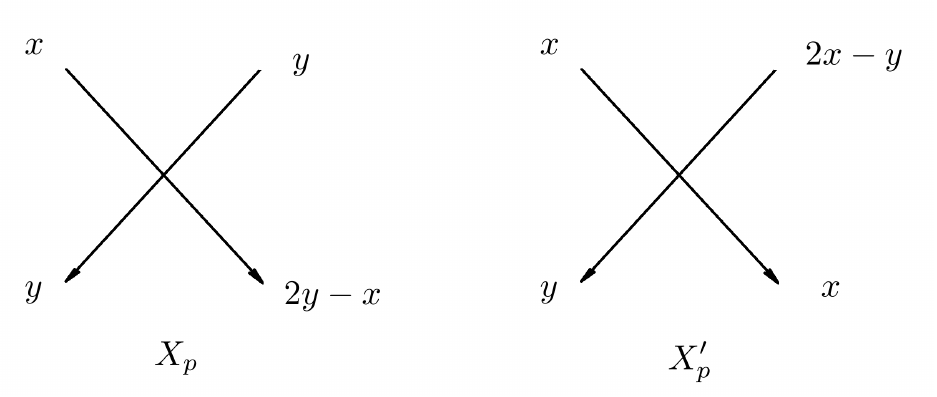}\]
so a coloring which satisfies both $X_p$ and $X_p'$ must satisfy 
$2x=2y$ at every precrossing. Since $p$ is odd, $2$ is invertible in 
$\mathbb{Z}_p$ and we have $x=y=2x-y=2y-x$ as required.
\end{proof}

\begin{corollary} Let $p\in\mathbb{Z}$ be odd. 
A pseudolink $L$ is strongly $p$-colorable if and only if
\[\mathrm{Hom}(\mathcal{P}(L),X_p))\cap\mathrm{Hom}(\mathcal{P}(L),X'_p))
\ne\emptyset.\]
\end{corollary}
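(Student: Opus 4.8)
The plan is to prove the two implications separately; one direction is an immediate consequence of the preceding Lemma, and the other is a direct verification that a strong $p$-coloring satisfies the coloring conditions for both $X_p$ and $X_p'$.

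\emph{The ``if'' direction.} Suppose $f\in\mathrm{Hom}(\mathcal{P}(L),X_p)\cap\mathrm{Hom}(\mathcal{P}(L),X_p')$, and fix a diagram $D$ of $L$ so that $f$ is an assignment of colors to the semiarcs of $D$. By hypothesis $f$ is simultaneously an $X_p$-coloring and an $X_p'$-coloring of $D$, so the Lemma applies and tells us that $f$ is a strong $p$-coloring of $D$; hence $L$ is strongly $p$-colorable. This direction requires no computation beyond invoking the Lemma.

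\emph{The ``only if'' direction.} Conversely, let $f$ be a strong $p$-coloring of a diagram $D$ of $L$; I would show $f$ lies in both $\mathrm{Hom}$-sets. Two facts do the work. First, both $X_p$ and $X_p'$ have classical operations $x\utr y=2y-x$ and $x\otr y=x$, which are exactly the $p$-coloring biquandle operations; therefore at every classical crossing the $X_p$- and $X_p'$-coloring conditions coincide with the defining relations of a $p$-coloring, which $f$ satisfies by definition of a strong $p$-coloring. Second, at every precrossing the four incident semiarcs of $f$ carry one common color $c$, and one checks that the precrossing coloring rule $S'$ fixes the monochromatic input: $S'(c,c)=(c\od c,\,c\ud c)=(c,c)$ in each of $X_p$ and $X_p'$, since $s=1$ forces $c\ud c=c\od c=sc=c$ (equivalently, this is pI-adequacy together with the diagonal being the identity). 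Hence $f$ satisfies every crossing relation for $X_p$ and for $X_p'$, so $f\in\mathrm{Hom}(\mathcal{P}(L),X_p)\cap\mathrm{Hom}(\mathcal{P}(L),X_p')$, and in particular this set is nonempty.

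\emph{Where the care is needed.} There is no real obstacle here; the only thing to be careful about is the bookkeeping of orientation conventions — making sure $\ud$ and $\od$ are attached to the correct outgoing semiarcs at a precrossing (the same $S'$-convention already used in the proof of the Lemma), and confirming that the ``all four semiarcs equal'' condition in the definition of a strong $p$-coloring is precisely what makes $(c,c)$ a fixed point of $S'$ simultaneously in $X_p$ and $X_p'$. One may also note that the constant coloring witnesses both sides of the biconditional, so the substantive content is the correspondence between nontrivial strong $p$-colorings and nontrivial common $X_p$/$X_p'$-colorings, for which the argument above applies unchanged.
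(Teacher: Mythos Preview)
Your proof is correct and matches the paper's intended route: the paper states the result as an unproven corollary of the preceding Lemma, and you have correctly spelled out both directions --- invoking the Lemma for the ``if'' direction and doing the easy verification (classical crossings match because $s=1$, $t=-1$ is shared; precrossings match because $s=1$ forces $c\ud c=c\od c=c$) for the converse. Your closing remark that the constant coloring already witnesses both sides, so that the substantive content lies in the correspondence of \emph{nontrivial} colorings, is a worthwhile observation that the paper does not make explicit.
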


Finally, we conclude with generalizations of the Alexander polynomial to the 
cases of singular links and pseudolinks.

Let $D$ be an oriented singular link diagram or pseudolink diagram. 
We obtain a homogeneous system of linear equations over $\Lambda'$ from the 
crossing relations of $D$, describing a presentation of the \textit{fundamental 
Alexander psyquandle} of $L$. In fact, using our crossing labelings this 
presentation is given by a matrix $A$ with entries in the polynomial ring 
$\hat\Lambda=\mathbb{Z}[t,s,a,b,t^{-1},s^{-1},a^{-1},b^{-1}]$ where 
$t^{-1}, s^{-1}, a^{-1},b^{-1}$ are independent variables and which has 
$\Lambda'=\hat\Lambda/(tt^{-1}-1,ss^{-1}-1,aa^{-1}-1,bb^{-1}-1,s+t-a-b)$ 
as a quotient. 
Following the same procedure described in \cite{CHN} (see also Chapter 6 in
\cite{L} for a nice summary of the classical case, and note that our matrix 
$A$ is the transpose of the analogous matrix in \cite{L}), we obtain a 
sequence of ideals $I_k\subset \hat\Lambda$ which are invariants of $L$ by 
setting $I_k$ to be the ideal in $\hat\Lambda$ generated by the codimension $k$ 
minors $M_k$ of $A$ together with the polynomials
$\{tt^{-1}-1,ss^{-1}-1,aa^{-1}-1,bb^{-1},s+t-a-b\}$.

\begin{definition}
Let $L$ be an oriented singular link or pseudolink. Any generator of the 
smallest principal ideal $P_k$ containing the ideal $I_k\subset \hat\Lambda$
generated by the codimension $k$ minors of a presentation matrix
$A$ and the polynomials $\{tt^{-1}-1,ss^{-1}-1,aa^{-1}-1,bb^{-1},s+t-a-b\}$
is the \textit{$k$th Alexander psyquandle polynomial} of $L$, and fixing
a monomial ordering $\prec$ on $\{t,s,a,b,t^{-1},s^{-1},a^{-1},b^{-1}\}$,
the reduced Gr\"obner basis for $I_k$ is the \textit{$k$th Alexander-Gr\"obner}
invariant of $L$.
\end{definition}
 
A useful special case is to use the Jablan psyquandle, i.e. set 
$a=b=\frac{s+t}{2}$ with coefficients in $\mathbb{Z}[\frac{1}{2}]$. More
precisely, we have:

\begin{definition}
The \textit{Jablan Polynomial} $\Delta_J(L)$ of an oriented pseudolink or 
singular link $L$ is any generator of the smallest principal ideal in 
$\Lambda_J=\mathbb{Z}[\frac{1}{2},s^{\pm 1}, t^{\pm 1},\frac{1}{s+t}]$ containing
the ideal generated by the codimension 1 minors of the Jablan psyquandle 
matrix of $L$ with $a=b=\frac{s+t}{2}$. 
\end{definition}

As in the case of the Alexander polynomial, the codimension 1 elementary 
ideal in the Jablan module is principal, so we can simply take any codimension
1 minor to compute $\Delta_J$ up to units. First, we have

\begin{lemma}
Let $L$ be a classical link considered as a pseudolink without precrossings.
Then $\Delta_J(L)$ is a homogeneous polynomial in $s$ and $t$ which specializes
to the Alexander polynomial up to powers of 2 by setting $s=1$.
\end{lemma}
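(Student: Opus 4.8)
The plan is to compare the presentation matrix $A$ of the fundamental Alexander psyquandle of $L$ in the Jablan specialization $a=b=\frac{s+t}{2}$ with the usual Alexander-matrix presentation of $L$ as a classical link. First I would observe that when $L$ has no precrossings, the matrix $A$ uses only the classical operations $x\utr y = tx+(s-t)y$ and $x\otr y = sx$, so $A$ is exactly the Alexander biquandle presentation matrix of $L$ over $\Lambda=\mathbb{Z}[s^{\pm1},t^{\pm1}]$ (the Jablan variables $a,b$ never appear). Every entry of this matrix is a monomial of degree $1$ in $s,t$: each crossing contributes two rows (one per outgoing semiarc), and in each such row the three nonzero entries are $t$, $s-t$ (from a $\utr$ relation, which after moving everything to one side reads $tx+(s-t)y - (\text{output}) = 0$, contributing entries $t$, $s-t$, $-1$) and $s$, $-1$ (from the $\otr$ relation). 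To make the $-1$'s homogeneous one multiplies through — equivalently, one treats the output semiarc variable as having been scaled — so that after clearing, each row is homogeneous of degree $1$ in $(s,t)$; hence any square submatrix of size $(n-1)$, where $n$ is the number of semiarcs, has determinant homogeneous of degree $n-1$.

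The key steps, in order, are: (1) write down the coloring equations for a diagram $D$ of $L$ with no precrossings and note they coincide with the Alexander biquandle equations, so the codimension-$1$ minor computing $\Delta_J$ is (up to units in $\Lambda_J$) the codimension-$1$ minor of the Alexander biquandle matrix — which is well known (see \cite{KR,S}) to recover the one-variable Alexander polynomial $\Delta_L(t)$ up to units when one sets $s=1$; (2) argue homogeneity: since every entry of $A$ is (after clearing the $-1$'s to homogeneous degree-$1$ form, i.e. rescaling rows by units of $\Lambda_J$) homogeneous of degree $1$ in $s$ and $t$, every maximal minor is homogeneous in $s,t$, and since $\Delta_J$ is defined only up to multiplication by units of $\Lambda_J=\mathbb{Z}[\frac12,s^{\pm1},t^{\pm1},\frac1{s+t}]$ we may choose the homogeneous representative; (3) specialize $s=1$ and identify the result with $\Delta_L(t)$ up to a power of $2$ — the power of $2$ creeping in because clearing denominators or normalizing the Jablan representative over $\mathbb{Z}[\frac12]$ can introduce factors of $2$, but no other primes, since the only denominators allowed are powers of $2$ (and of $s+t$, which becomes a unit $2$ after $s=1$... wait, $s+t \mapsto 1+t$, which is not a unit, so one must be slightly careful here). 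I would handle the $\frac{1}{s+t}$ issue by noting that $s+t$ does not divide any maximal minor of the biquandle matrix (each row already being a combination not involving $\frac1{s+t}$), so inverting $s+t$ does not change the principal ideal, and after setting $s=1$ we are left with a polynomial in $\mathbb{Z}[\frac12,t^{\pm1}]$ equal to $\Delta_L(t)$ times a unit, i.e. times $\pm t^k 2^m$.

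The main obstacle I expect is step (3): pinning down exactly that only powers of $2$ (and monomials $\pm t^k$) are lost, rather than other spurious factors. The subtlety is that the Jablan matrix lives over $\mathbb{Z}[\frac12,\dots]$ rather than over $\mathbb{Z}[\dots]$, and the "smallest principal ideal containing $I_1$" is taken in that larger ring; a generator there is only determined up to units of $\mathbb{Z}[\frac12,s^{\pm1},t^{\pm1},\frac1{s+t}]$. One must check that the classical Alexander biquandle minor, which is an honest element of $\mathbb{Z}[s^{\pm1},t^{\pm1}]$, remains (up to such units) the generator of the extended ideal — this is where a clean statement about $s+t$ being a non-zero-divisor modulo the minor, and about $2$ being the only rational prime that can appear in the normalization, is needed. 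I would prove this by exhibiting a specific codimension-$1$ minor using a spanning-tree / Wirtinger-style choice of rows and columns (delete one semiarc and one crossing relation so that the remaining matrix is, up to a monomial unit, the classical Alexander matrix with $t$ replaced by $t$ and an overall factor of $s$ on each $\otr$ row), so that the only normalization performed is dividing by monomials and by $2$. Once that explicit minor is in hand, homogeneity in $(s,t)$ is immediate from inspection and the specialization $s=1$ recovering $\Delta_L(t)$ up to $\pm t^k 2^m$ follows.
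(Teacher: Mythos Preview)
Your final paragraph essentially contains the paper's argument: the paper simply observes that the biquandle matrix is row/column equivalent to a block matrix $\left[\begin{smallmatrix} A' & 0 \\ 0 & I\end{smallmatrix}\right]$, where $A'$ is the Alexander \emph{quandle} presentation matrix with every $1$ replaced by $s$ (this is exactly your ``Wirtinger-style'' reduction, using the $\otr$-rows $sx - x'=0$ to eliminate half the semiarc variables). Since the entries of $A'$ are then among $\pm s,\ t,\ s-t$, all homogeneous of degree $1$, every minor of $A'$ is homogeneous, and setting $s=1$ recovers the classical Alexander matrix on the nose. That is the whole proof in the paper---two sentences.

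By contrast, your earlier route to homogeneity (step (2), ``multiplying through'' or rescaling output-semiarc variables) does not work as stated. Each row of the biquandle matrix carries a $-1$ in the output-semiarc column and degree-$1$ entries elsewhere, so the rows are genuinely inhomogeneous. Column scaling to repair this would require a height function $h$ on semiarcs satisfying $h(\text{output})=h(\text{input})+1$ at every crossing; but following a closed component through its $k$ crossings forces $h$ to increase by $k$ while also returning to its initial value, a contradiction for $k>0$. Concretely, for the Hopf link the $4\times 4$ biquandle matrix has codimension-$1$ minors such as $s(st-1)$, which is not homogeneous, so no unit rescaling of rows or columns makes all minors homogeneous. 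The block reduction is not a cosmetic convenience---it is the mechanism that actually produces a matrix whose minors are homogeneous.

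Your step-(3) worries about $(s+t)^{-1}$ and stray powers of $2$ are also largely unnecessary once the block form is in hand: $A'$ has entries in $\mathbb{Z}[s,t]$, so no denominators arise, and the identification with the Alexander polynomial is immediate rather than something requiring a careful unit-tracking argument.
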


\begin{proof}
The Jablan matrix of a classical link is equivalent by row and column moves 
to the block matrix 
\[\left[\begin{array}{c|c}
A' & 0\\ \hline
0 & I
\end{array}\right]\]
where $A'$ is the matrix obtained from the presentation matrix $A$ of the
Alexander quandle of $L$ by replacing every 1 with $s$. Then the codimension
1 minors of $J$ equal the codimension 1 minors of $A'$; these are homogeneous
since every entry is either $\pm s$, $t$ or $s-t$.
\end{proof}

We have the following standard lemma, sometimes given as an exercise in
commutative algebra courses:

\begin{lemma}\label{lp}
Let $R$ be a commutative ring with identity. Then the units in $R[x^{\pm 1}]$
have the form $rx^n$ where $r$ is a unit in $R$.
\end{lemma}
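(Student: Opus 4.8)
The plan is to run the usual degree-and-order bookkeeping on Laurent polynomials. For a nonzero $f=\sum_i a_i x^i\in R[x^{\pm 1}]$, write $d(f)$ for the largest index $i$ with $a_i\neq 0$ and $v(f)$ for the smallest; thus $d(f)\geq v(f)$, with equality precisely when $f=a\,x^n$ is a single monomial. Suppose $f$ is a unit, say $fg=1$ with $g=\sum_j b_j x^j$ nonzero. First I would single out the extreme terms of the product: the coefficient of $x^{d(f)+d(g)}$ in $fg$ is $a_{d(f)}b_{d(g)}$, and the coefficient of $x^{v(f)+v(g)}$ is $a_{v(f)}b_{v(g)}$.

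The one substantive input is that $R$ is an integral domain, so these two products are nonzero; since $fg=1$ is supported in degree $0$ alone, this forces $d(f)+d(g)=0=v(f)+v(g)$. Subtracting gives $\bigl(d(f)-v(f)\bigr)+\bigl(d(g)-v(g)\bigr)=0$, and as both parenthesized quantities are $\geq 0$ each must vanish. Hence $f=a_n x^n$ and $g=b_m x^m$ are monomials, and then $fg=1$ forces $m=-n$ and $a_n b_m=1$; so $r:=a_n$ is a unit of $R$ and $f=r\,x^n$, as claimed. An alternative, slightly cleaner reduction would be to clear denominators — multiply $f$ and $g$ by large powers of $x$ to land in $R[x]$ and invoke the standard fact $R[x]^\times=R^\times$ for $R$ a domain together with additivity of degree — but the direct Laurent computation above is shorter.

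The step I expect to be the genuine obstacle is the hypothesis on $R$: as phrased for an arbitrary commutative ring with identity the lemma is \emph{false} — for instance $1+2x$ is a unit of $(\mathbb{Z}/4)[x^{\pm 1}]$ that is not a unit times a monomial, and $(1,0)+(0,1)x$ is such a unit of $(k\times k)[x^{\pm 1}]$ — so the argument truly requires ruling out nilpotents and nontrivial idempotents, and "integral domain" is the clean sufficient condition to use. This is harmless for the way the lemma is applied here, since in each use $R$ is a localization of a polynomial ring over $\mathbb{Z}$ and hence a domain. The only subtlety in the proof itself is that one must track \emph{both} $d$ and $v$: controlling the top exponent alone pins down $g$ relative to $f$ but not the monomiality of either, and it is exactly the two-sided estimate that collapses $f$ to a monomial.
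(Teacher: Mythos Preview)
Your argument is correct once you add the integral-domain hypothesis, and you are right that the lemma is false for general commutative rings with identity: your counterexamples $1+2x\in(\mathbb{Z}/4)[x^{\pm 1}]$ and $(1,0)+(0,1)x\in(k\times k)[x^{\pm 1}]$ are on point, and you correctly note that the rings to which the paper actually applies the lemma (iterated Laurent localizations of $\mathbb{Z}$) are domains.

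The paper's proof takes precisely the ``alternative'' route you sketch at the end: it factors out the lowest power of $x$ to write $p(x)=x^a q(x)$ with $q\in R[x]$ and $q(0)\neq 0$, does the same for the inverse, and then evaluates at $x=0$ to force the exponent and reduce to the claim that a unit of $R[x]$ lies in $R^\times$. That argument has the same hidden hypothesis yours does---the evaluation step needs $q(0)q'(0)\neq 0$, and the final ``unit of $R[x]$ is a unit of $R$'' step is exactly where nilpotents or nontrivial idempotents break things---so your diagnosis applies equally to the paper. Your direct two-sided degree bookkeeping is slightly cleaner in that it makes the single use of the domain hypothesis visible (nonvanishing of $a_{d(f)}b_{d(g)}$ and $a_{v(f)}b_{v(g)}$), whereas the paper's reduction uses it twice without flagging it; conversely the paper's version has the minor advantage of invoking the familiar $R[x]^\times=R^\times$ as a black box rather than redoing that computation inside the Laurent ring.
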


\begin{proof}
Any Laurent polynomial $p(x)=\sum_{k=a}^br_kx^k$ can be rewritten as
\[p(x)=x^a\sum_{k=a}^br_kx^{k-a}=x^aq(x) \]
where $q(0)=r_a\ne 0$. Then if $p(x)$ is a unit with inverse $p'(x)=x^{a'}q'(x)$
where $q'(0)=r_{a'}'\ne 0$, we have
\[1=pp'=qq'x^{a-a'}.\]
Evaluating at $x=0$ yields a contradiction unless $a=a'$, so we
have $pp'=qq'=1$; then $q$ is an invertible (non-Laurent) polynomial in $x$, 
that is to say, a unit in the ring $R$, and we have $p=rx^n$ as required.
\end{proof}

Applying the lemma \ref{lp} with $x=2,s,t$, we see that units in 
$\mathbb{Z}[2^{-1},s,t]$ are of the form $\pm2^js^kt^n$; then in the case of
adjoining $(s+t)^{-1}$, after factoring out the minimal power of $(s+t)$
and the minimal power of $s^jt^k$ in lexicographical ordering on $(j,k)$,
evaluation at $(0,0)$ yields the analogous result and we see that that 
the units in $\Lambda_J$ are of the form  $\pm 2^is^jt^k(s+t)^l$. Hence,
we can normalize a Jablan polynomial up to sign by clearing 
the denominator and canceling any common factors of $2,s,t$ and $(s+t)$. 

\begin{example}
The 2-bouquet graph $1_1^l$ in example \ref{ex:11l} has Jablan psyquandle matrix
\[\left[\begin{array}{cccc}
\frac{s + t}{2} & \frac{s- t}{2} & -1 & 0\\ 
\frac{s - t}{2} & \frac{s + t}{2} & 0 & -1 \\ 
s - t & t & 0 & -1 \\
s & 0 & -1 & 0
\end{array}
\right]\]
which has codimension 1 minors 
\[\left\{-\frac{s-t}{2},\ \frac{s-t}{2},\ \frac{-s(s-t)}{2},\ 
\frac{s(s-t)}{2}\right\}\] 
which have gcd $s-t$ up to units in $\Lambda_J$ (indeed, are equal up to 
units in $\Lambda_J$), so we have $\Delta_J(1_1^l)=s-t$. 
\end{example}

\begin{example}\label{ex:jp}
We computed the Jablan polynomials of a choice of orientation for
each of the pseudoknots and 2-bouquet
graphs in \cite{HHJJMR} and \cite{O} respectively. The results are collected 
in the tables.
\[
\begin{array}{r|l}
\Delta_J(L)& L \\\hline
1 & 3_1.1, 3_1.2,  4_1.1, 4_1.2, 4_1.3, \\
 & 5_1.1,5_1.2, 5_2.1, 5_2.2, 5_2.6, 5_2.9 \\
s^2+t^2 & 3_1.3,5_2.3, 5_2.4 \\
s^2-st+t^2 & 5_2.5, 5_2.10 \\
s^2-4st+t^2& 4_1.5 \\
s^2-6st+t^2& 4_1.4 \\
3s^2-2st+3t^2 & 5_2.7 \\
3s^2-4st+3t^2 & 5_2.8 \\
s^4+2s^3t+2s^2t^2+2st^3+t^4 & 5_1.3 \\
s^4+s^3t+st^3+t^4 & 5_1.4 \\
s^4+t^4 & 5_1.5\\
\end{array}
\begin{array}{r|l}\Delta_J(L)& L \\\hline
         1 & 0_1^k \\
      s^2+t^2 & 2_1^k\\
s^2-4st+t^2 & 3_1^k\\
s^2-st+t^2 & 4_2^k \\
s^2-3st+t^2 & 5_1^k \\
2s^2-5st+2t^2 & 6_{19}^k \\
3s^2-4st+3t^2 & 6_2^k\\
3s^2-5st+3t^2 & 6_6^k \\
3s^2-8st+3t^2 & 5_2^k \\
5s^2-8st+5t^2 & 6_3^k,\ 6_7^k\\
s^4+t^4 & 4_1^k, 4_3^k\\
s^4+s^3t-2s^2t^2+st^3+t^4 & 5_8^k \\
s^4-s^3t+s^2t^2-st^3+t^4 & 6_{10}^k, 6_{13}^k \\
s^4-s^3t-2s^2t^2-st^3+t^4 & 6_{18}^k \\
s^4-4s^3t+4s^2t^2-4st^3+t^4 & 5_3^k \\
\end{array}\]\[\begin{array}{r|l}\Delta_J(L)& L \\\hline
s^4-3s^3t+2s^2t^2-3st^3+t^4 & 5_4^k \\
s^4-2s^3t-2st^3+t^4 & 5_5^k\\
s^4-2s^3t+4s^2t^2-2st^3+t^4 & 5_6^k \\
s^4-3s^3t+6s^2t^2-3st^3+t^4 & 5_7^k \\
s^4-4s^3t+8s^2t^2-4st^3+t^4 & 6_{16}^k \\
s^4-5s^3t+6s^2t^2-5st^3+t^4 & 6_{14}^k \\
s^4-5s^3t+10s^2t^2-5st^3+t^4 & 6_{17}^k \\
s^4-6s^3t+8s^2t^2-6st^3+t^4 & 6_{11}^k \\
s^4-6s^3t+12s^2t^2-6st^3+t^4 & 6_{15}^k \\
s^4-7s^3t+10s^2t^2-7st^3+t^4 & 6_{12}^k \\
2s^4-s^3t-st^3+2t^4 & 6_5^k \\
2s^4-3s^3t+4s^2t^2-3st^3+2t^4 & 6_8^k \\
3s^4-4s^3t+4s^2t^2-4st^3+3t^4 & 6_4^k \\
3s^4-5s^3t+6s^2t^2-5st^3+3t^4 & 6_9^k \\
s^6+t^6 & 6_1^k \\
\end{array}
\begin{array}{r|l}
\Delta_J(L)& L \\\hline
s-t & 1_1^l, 6_{11}^l \\
5s-5t & 5_1^l \\
s^3-t^3 & 3_1^l \\
s^3-2s^2t+2st^2-t^3 & 4_1^l,5_3^l \\
s^3-4s^2t+4st^2-t^3 & 6_2^l\\
s^3-8s^2t+8st^2-t^3 & 6_{12}^l \\
2s^3-s^2t+st^2-2t^3 & 5_2^l\\
2s^3-3s^2t+3st^2-2t^3 & 6_1^l\\
2s^3-5s^2t+5st^2-2t^3 & 6_6^l \\
s^5-2s^3t^2+2s^2t^3-t^5 & 6_{10}^l\\
s^5-2s^4t+2s^3t^2-2s^2t^3+2st^4-t^5 & 6_5^l \\
s^5-2s^4t+4s^3t^2-4s^2t^3+2st^4-t^5 & 6_3^l,6_4^l \\
s^5-3s^4t+5s^3t^2-5s^2t^3+3st^4-t^5 & 6_4^l \\
s^5-4s^4t+6s^3t^2-6s^2t^3+4st^4-t^5 & 6_7^l \\
s^5-4s^4t+8s^3t^2-8s^2t^3+4st^4-t^5 & 6_9^l \\
\end{array}\]
\end{example}

In light of example \ref{ex:jp}, we make a few observations
in the following remarks:


\begin{remark}
The polynomials in Example \ref{ex:jp} are all homogeneous as previously noted 
and symmetric in the sense that the coefficients of $s^{n-k}t^k$ and $s^kt^{n-k}$ 
are equal. In the case of classical knots and links, symmetry in $t$ and $s$
follows from the fact that the upper and lower biquandles are isomorphic
and in our notation, the resulting polynomials are related by switching $s$ and
$t$. 
\end{remark}

\begin{remark}\label{rem3}
One alternative idea for an Alexander-style polynomial for a 
pseudoknot or pseudolink would be to take a weighted average of Alexander 
polynomials of the classical resolutions of the pseudoknot or pseudolink
with weights from the WeRe set. Indeed, at the level of Jablan matrix this
is effectively what we are doing.

However, it is not clear in general how to take a weighted average of
Alexander polynomials since 
the Alexander polynomial is only defined up to multiplication by units: 
should an average of $t$ and $t$ be 
$\frac{t+t}{2}=t$
or
$\frac{t+(-1)t}{2}=0$
or even
$\frac{(t^{-1})t+(t)t}{2}=\frac{1+t^2}{2}$?
We observe that in the cases above, the $s=1$ specialization of the Jablan 
polynomial of a pseudolink does in fact agree with a weighted sum of 
some choice of normalizations of Alexander 
polynomials of the classical resolutions: for example, pseudoknot $3_1.1$ has
Jablan polynomial $s^2+2st+t^2$, specializing to $1+2t+t^2$. If we 
symmetrize this in $t$, we obtain $t^{-1}+2+t$. Then $3_1.1$ has 
WeRe set 
\[\left\{\left(0_1,\frac{3}{4}\right),\left(3_1,\frac{1}{4}\right)\right\}\]
and taking a weighted sum of  symmetric normalizations with positive
leading coefficient of the Alexander polynomials of $0_1$ and $3_1$ and clearing
the denominator, we have 
\[3(1)+1(t^{-1}-1+t)=t^{-1}+2+t.\]

However, the pseudoknot $4_1.4$ has Jablan polynomial 
$\Delta_J(4_1.4)=s^2-6st+t^2$ and WeRe set 
\[\left\{\left(0_1,\frac{3}{4}\right),\left(4_1\frac{1}{4}\right)\right\}\]
with positive symmetric normalized Alexander polynomials
\[\Delta(0_1)=1\quad \Delta(4_1)=t^{-1}-3+t;\]
taking the weighted sum and clearing the denominator, we have
\[3\Delta(0_1)+1\Delta(4_1)=3+(t^{-1}-3+t)=t^{-1}+t\ne t^{-1}-6+t.\]
But, if we multiply the first polynomial by the unit $-1$, we obtain
\[3(-1)\Delta(0_1)+1\Delta(4_1)=-3+(t^{-1}-3+t)= t^{-1}-6+t,\]
coinciding with the specialization of $\Delta_J(4_1.4)$ as desired.
\end{remark}

In light of these remarks, we propose the following conjecture:

\begin{conjecture}\label{conj1}
There exists a choice of normalization rule for the Jablan polynomial
such that for every pseudoknot $K$ with WeRe set 
$S=\{(\alpha_1,K_1),\dots, (\alpha_n,K_n)\}$ we have
\[\Delta_J(K)=\sum_{j=1}^n \alpha_j\Delta_J(K_j).\]
\end{conjecture}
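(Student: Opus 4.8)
The plan is to prove the identity exactly at the level of the Jablan presentation matrix and then treat the normalization separately. Fix a diagram $D$ of the pseudoknot $K$ with precrossings $p_1,\dots,p_n$, let $J(D)$ be its Jablan psyquandle matrix (with $a=b=\frac{s+t}{2}$), and for a resolution $r\in\{+,-\}^n$ let $D_r$ be the resolved diagram and $M_r=J(D_r)$ its matrix; note that $M_r$ has exactly the same columns and rows as $J(D)$ except that the two rows coming from each $p_i$ are replaced by the two rows of the corresponding classical crossing. The first step is the elementary but crucial observation that, because the Jablan operation at a precrossing is $\frac{s+t}{2}x+\frac{s-t}{2}y=\frac12\bigl(tx+(s-t)y\bigr)+\frac12(sx)$ and the over/under roles of the two strands swap between the two classical resolutions of $p_i$, the pair of rows that $p_i$ contributes to $J(D)$ is exactly the average of the pair it contributes to $M_r$ when $r_i=+$ and the pair it contributes to $M_r$ when $r_i=-$. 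Checking this requires matching orientation conventions and examining the two crossing signs, but it is a finite local verification; the single-precrossing matrix displayed for $1_1^l$ in the example preceding Remark \ref{rem3} is the model case.

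If each precrossing contributed a single row, multilinearity of the codimension-one minor in those rows would immediately give $\det J(D)=2^{-n}\sum_{r}\det M_r$. Since each $p_i$ contributes two rows, naive multilinearity produces $4^n$ terms, including ``split'' terms in which the two rows of some $p_i$ receive opposite resolutions and which correspond to no honest resolution of $K$. The key point is a cancellation lemma: at each $p_i$ the difference (positive row) minus (negative row) is the same rank-one vector $v_i=(s-t)(e_{x_i}-e_{y_i})$, where $e_{x_i},e_{y_i}$ are the incoming semiarcs of $p_i$; this holds for both rows of $p_i$ simultaneously, and is precisely the algebraic reason that the choice $a=b=\frac{s+t}{2}$ works. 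Consequently, expanding the minor one precrossing at a time, for each $p_i$ the sum of its two split terms minus the sum of its two pure terms is a minor with two rows equal to $v_i$ and $-v_i$, hence $0$; this collapses the four $p_i$-terms to twice the two pure terms. Iterating over $p_1,\dots,p_n$ --- choosing the deleted row and column from a classical part of $D$ so that the deletion is uniform across all $M_r$ --- yields the exact identity $\det J(D)=2^{-n}\sum_{r\in\{+,-\}^n}\det M_r$.

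To finish, group the $2^n$ resolutions by link type: if the WeRe set of $K$ is $\{(\alpha_1,K_1),\dots,(\alpha_m,K_m)\}$ then $\alpha_j=2^{-n}\,\#\{r:K_r\cong K_j\}$, and each $M_r$ is the Jablan matrix of the classical knot $K_r$, so its codimension-one minor represents $\Delta_J(K_r)=\Delta_J(K_j)$ up to a unit of $\Lambda_J$ by the invariance of $\Delta_J$ and the classical-link lemma. Grouping gives $\det J(D)=\sum_{j}\alpha_j\Delta_J(K_j)$ up to units, and likewise $\det J(D)$ represents $\Delta_J(K)$ up to a unit. Thus the whole question reduces to a statement about units: the claimed identity holds for any normalization rule that, simultaneously for $K$ and for all of its resolutions, selects the representatives for which these unit ambiguities cancel.

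This last point is the hard part, and the reason the statement is only a conjecture. Up to here the computation is an honest equality in $\hat\Lambda$ (equivalently $\Lambda_J$), but $\Delta_J$ of a given pseudoknot or classical knot is canonical only up to units $\pm 2^is^jt^k(s+t)^l$, and the factorizations of the minors $\det M_r$ may differ between resolutions of the same type coming from different sub-diagrams. The natural candidate is to clear all denominators in $2$, $s$, $t$, and $s+t$, divide out the content in $\mathbb{Z}[s,t]$ to get a homogeneous primitive polynomial, and fix the remaining sign by, say, requiring a positive leading coefficient in a fixed monomial order. Example \ref{ex:jp} and Remark \ref{rem3} strongly suggest that some such rule works, but the computations in Remark \ref{rem3} also show the sign choices are delicate --- there, one summand had to be multiplied by the unit $-1$ --- so proving that one diagram-independent normalization makes $\Delta_J(K)=\sum_j\alpha_j\Delta_J(K_j)$ hold for every pseudoknot is the genuine obstacle, and is exactly what Conjecture \ref{conj1} asserts.
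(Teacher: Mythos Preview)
The statement you are addressing is a \emph{conjecture}: the paper offers no proof, and indeed lists it as the first open question in Section~\ref{Q}. So there is no proof in the paper to compare against.

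That said, your matrix-level argument is correct and sharpens what the paper only hints at. Remark~\ref{rem3} observes informally that ``at the level of Jablan matrix this is effectively what we are doing,'' i.e., that the Jablan rows at a precrossing average the two resolution rows; you make this precise and, more importantly, supply the cancellation step that the paper does not: because the two row-differences $P^1-N^1$ and $P^2-N^2$ at a single precrossing are the \emph{same} vector (up to sign) $(s-t)(e_y-e_x)$, the split cross-terms in the multilinear expansion of the minor cancel in pairs. The resulting exact identity
\[
\mathrm{minor}_{ij}\bigl(J(D)\bigr)\;=\;2^{-n}\sum_{r\in\{+,-\}^n}\mathrm{minor}_{ij}\bigl(M_r\bigr)
\]
for a fixed deleted row and column is valid and is a genuine step beyond what the paper contains.

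But, as you yourself concede in your final paragraph, this does not settle the conjecture. The difficulty is not merely that units appear, but that two resolutions $r,r'$ with $K_r\cong K_{r'}\cong K_j$ may contribute minors differing by a unit of $\Lambda_J$, so the grouping
\[
2^{-n}\sum_{r:\,K_r\cong K_j}\mathrm{minor}_{ij}(M_r)\;=\;\alpha_j\,\Delta_J(K_j)
\]
need not hold for any diagram-independent representative $\Delta_J(K_j)$. Your sentence ``the claimed identity holds for any normalization rule that \dots selects the representatives for which these unit ambiguities cancel'' is circular: it restates the conjecture rather than resolving it. What you have produced is a useful reduction and an honest account of where the obstruction lies, consistent with the paper's own treatment of the statement as open --- but not a proof.
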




\section{Questions}\label{Q}

We conclude with some questions for future research.

The main question, of course, is conjecture \ref{conj1} true? More precisely,
what normalization rule makes 
\[\Delta_J(K)=\sum_{j=1}^n \alpha_j\Delta_J(K_j)\]
for pseudoknots with WeRe set $S=\{(\alpha_1,K_1),\dots, (\alpha_n,K_n)\}$?

What enhancements of psyquandle counting invariants can be defined? 
Enhancements of psyquandle counting invariants will be the topics of 
future papers.

\bibliography{sn-no-rs}{}
\bibliographystyle{abbrv}

\bigskip

\noindent
\textsc{Department of Mathematical Sciences \\
Claremont McKenna College \\
850 Columbia Ave. \\
Claremont, CA 91711} 

\

\noindent
\textsc{Department of Teacher Education \\
Shumei University \\
1-1 Daigaku-cho, Yachiyo \\
Chiba Prefecture 276-0003, Japan}

\

\noindent
\textsc{Department of Mathematics\\ North Carolina State University\\ 2311 Stinson dr. \\ Raleigh, NC 27695-8205
}

\end{document}